\newtheorem{theorem}{Theorem}[section]
\newtheorem{lemma}[theorem]{Lemma}
\newtheorem*{Inequality Lemma}{Inequality Lemma}
\newtheorem*{Compactness Lemma}{Compactness Lemma}
\newtheorem*{Lower Bound Theorem}{Lower Bound Theorem}
\newtheorem*{Upper Bound Theorem}{Upper Bound Theorem}
\newtheorem*{Non-Classifiability Theorem}{Non-Classifiability Theorem}
\newtheorem*{Essential Membership Theorem}{Essential Membership Theorem}
\newtheorem*{Generic Word Theorem}{Generic Word Theorem}
\newtheorem*{Complexity Theorem}{Complexity Theorem}
\newtheorem{proposition}[theorem]{Proposition}
\newtheorem{corollary}[theorem]{Corollary}
\newtheorem{conjecture}[theorem]{Conjecture}
\newtheorem{problem}[theorem]{Problem}
\newtheorem*{claim}{Claim}
\theoremstyle{definition}
\newtheorem{definition}[theorem]{Definition}
\theoremstyle{remark}
\newtheorem{remark}[theorem]{Remark}
\newtheorem{example}[theorem]{Example}
\numberwithin{equation}{subsection}
\newcommand{\NN}{\mathbb{N}}
\newcommand{\ZZ}{\mathbb{Z}}
\newcommand{\QQ}{\mathbb{Q}}
\newcommand{\RR}{\mathbb{R}}
\newcommand{\ComplexityFontII}[2]{\ComplexityFont{#1} \ComplexityFont{#2}}
\newcommand{\ComplexityFontIII}[3]{\ComplexityFontII{#1}{#2} \ComplexityFont{#3}}
\newcommand{\ESSENTIAL}{\ComplexityFont{ESSENTIAL}}
\newcommand{\SUBSET}{\ComplexityFontII{SUBSET}{SUM}}
\newcommand{\SUBSETT}{\ComplexityFontII{(SUBSET}{SUM)}}
\newcommand{\SUBSETP}{\ComplexityFontII{SUBSET}{SUM'}}
\newcommand{\SUBSETTP}{\ComplexityFontII{(SUBSET}{SUM')}}
\newcommand{\SMALLSCL}{\ComplexityFontII{SMALL}{SCL}}
\newcommand{\SMALLSCLT}{\ComplexityFontII{(SMALL}{SCL)}}
\newcommand{\VARSUBSETP}{\ComplexityFontIII{VAR}{SUBSET}{SUM'}}
\newcommand{\MIXEDSUBSETP}{\ComplexityFontIII{MIXED}{SUBSET}{SUM'}}
\newcommand{\MIXEDSUBSETTP}{\ComplexityFontIII{(MIXED}{SUBSET}{SUM')}}
\newcommand{\VARSUBSETTP}{\ComplexityFontIII{(VAR}{SUBSET}{SUM')}}
\newcommand{\COSUBSET}{\ComplexityFontII{coSUBSET}{SUM}}
\newcommand{\COSUBSETT}{\ComplexityFont{(coSUBSET-SUM)}}
\DeclareMathOperator{\cl}{cl}
\DeclareMathOperator{\scl}{scl}
\DeclareMathOperator{\inflow}{inflow}
\DeclareMathOperator{\outflow}{outflow}
\DeclareMathOperator{\indegree}{indegree}
\DeclareMathOperator{\outdegree}{outdegree}
\DeclareMathOperator{\abst}{abst}
\DeclareMathOperator{\supp}{supp}
\DeclareMathOperator{\conv}{conv}
\DeclareMathOperator{\cone}{cone}
\title{On the Complexity of Sails}
\date{}
\author{Lukas Brantner\\ (Appendix \lowercase{by} Freddie Manners)}
\dedicatory{To my family}
\begin{document}
\begin{abstract}
This paper analyses stable commutator length in groups $\ZZ^r\ast \ZZ^s$.

We bound $\scl$ from above in terms of the reduced wordlength (sharply in the limit) and from below in terms of the answer to an associated subset-sum type problem. Combining both estimates, we prove that, as $m$ tends to infinity, words of reduced length $m$ generically have scl arbitrarily close to $\frac{m}{4}-1$. 

We then show that, unless $\P=\NP$, there is no polynomial time algorithm to compute $\scl$ of efficiently encoded words in $F_2$.

All these results are obtained by exploiting the fundamental connection between $\scl$ and the geometry of certain rational polyhedra. Their extremal rays have been classified concisely and completely. However, we prove that a similar classification for extremal points is impossible in a very strong sense.
\end{abstract}
\maketitle
\markboth{}{}
\fancyhead[EC]{\uppercase{\footnotesize{Lukas Brantner}}}
\fancyhead[OC]{\uppercase{\footnotesize{On the Complexity of Sails}}}

\vspace*{-1.5em}
\section{Introduction} \label{sec:chapterone}
Stable commutator length (hereafter $\scl$) is a concept in geometric group theory which arises naturally in the study of least genus problems such as:
\begin{quote}
Given a topological space X and a loop $\gamma$, what is the least genus of a once-punctured, orientable surface which can be mapped to $X$ such that the boundary wraps once around $\gamma$?
\end{quote}
It transpires that the real-valued function $\scl$ gives an algebraic analogue of the (relative) Gromov-Thurston norm in topology and has deep connections to various areas of interest in modern geometry (see \cite{[3]}). The computation of scl is notoriously difficult and its distribution often mysterious, even in free groups. Important open problems in the theory of scl in such groups are to determine the image of $\scl$ (``inverse-problem"), and, more ambitiously, to find a clear relation between the outer form of a word and its scl (``form-problem").

An \textit{a priori} completely  unrelated concept ubiquitous in the theory of linear optimization is that of a (convex) polyhedron and its boundary, the \textit{sail}. If such a polyhedron P is pointed (i.e.~does not contain any line), it has a particularly simple ray-vertex-decomposition as $P=\cone(R)+\conv(V)$, where $R$ and $V$ are the \textit{finite} sets of extremal rays and points respectively (see Chapter $8$ of \cite{[1]}). Combinatorial optimization is often concerned with polyhedra whose elements represent flows, and which are given to us as the convex hulls of combinatorially distinguished flows (e.g. paths from source to sink, see Chapter $13$ in \cite{[6]}). In such cases, the description of $V$ and $R$ is a crucial step towards a complete understanding of the geometry of $P$.

These two concepts were bridged by Calegari's algorithm (see \cite{[4]}), which establishes an intricate connection between the computation of scl in groups of the form  $\displaystyle *_{i=1}^m  \ZZ^{m_i}$ and the geometry of certain \textit{rational} flow-polyhedra. The sails of these polyhedra are the unit sets of one-homogeneous functions, which one has to maximize over certain subsets in order to compute $\scl$.

There are two ways in which this link can be exploited:
the \textit{relative approach} compares the polyhedra corresponding to different words and converts geometric relations between them into numerical ones relating their $\scl$s.
In contrast, the \textit{absolute approach} uses the precise, very involved geometry of individual polyhedra to compute the $\scl$ of given words exactly.
The former technique is significantly more accessible as it does not require such a detailed analysis. Amongst other things, it has been used to prove the salient \textit{Surgery Theorem} (see Theorem $4.13$ in \cite{[4]}), which demonstrates that the scl of certain natural sequences of words converges. 

Following this method, we start off the first section of this paper by observing that certain linear-algebraic relations between exponents of words translate directly into inequalities of $\scl$ and then use this to relate the $\scl$-images of different groups of the form $\ZZ^r \ast \ZZ^s$. More importantly, we combine both of the aforementioned approaches to obtain new upper and lower bounds and use these to prove that the $\scl$ of a generic word of reduced length $m$ is close to $\frac{m}{4}-1$. 

Our lower bound implies that to prove the long-standing open conjecture that $\scl(\ZZ \ast \ZZ)\supseteq \QQ\cap[1,\infty)$, we can restrict our attention to a certain subclass of words. 

Our second main theorem shows that computing $\scl$ is hard: unless $\P = \NP$, the $\scl$ of a word cannot be determined in polynomial time.  

The second approach is more formidable, but promises more substantial progress towards a complete solution of the two guiding problems mentioned initially. An exhaustive analysis of the polyhedral geometry has been carried out in a few specific cases (see section $4.1$ in \cite{[4]}), allowing the explicit computation of scl in several infinite families of words. These partial successes raised the hope that a complete description of the polyhedra was within reach. Indeed, the first half of their ray-vertex-decomposition was found by Calegari who provided a general and simple classification of their extremal rays (Lemma 4.11 of \cite{[4]}, see page $4$).

The main result of the second section of this paper demonstrates that the next step cannot be made: a similar classification for extremal points is impossible, roughly speaking because they exhibit provably arbitrarily complicated behaviour. We conclude the paper by showing that a natural alternative description of the relevant polyhedra is infeasible from a complexity-theoretic perspective.

\subsection{Main Results}\label{sec:intro} We first use polyhedra to prove positive theorems on $\scl$, and then we provide negative results explaining why certain nice descriptions of these polyhedra cannot exist. All words are assumed to lie in the commutator subgroup of $\ZZ^\infty \ast \ZZ^\infty$\footnote{Here $\ZZ^\infty$ is free abelian group on countably many generators, which we denote by $\{a_i\}_{i\in \NN}$ in the left factor and by $\{b_i\}_{i\in \NN}$ in the right factor. For $\mathbf{x}\in \ZZ^\infty$ with components $x^{(i)}$, we write $\mathbf{a}^\mathbf{x}=a_1^{x^{(1)}}\cdot a_2^{x^{(2)}}\cdot...$, a similar expression defines $\mathbf{b}^\mathbf{x}$.}, to start in the left and to end in the right factor. This particular case comprises all words in all groups $\ZZ^r\ast \ZZ^s$. A word has reduced length $m$ if it switches $m$ times from one to the other factor of our free product. This notion generalises to all free products, and it differs from the classical wordlength, which counts the number of letters in a word. 
For the words we examine, $m=2n$ is even.

In Section \ref{background}, we define stable commutator length (\ref{sec:scl}), and then give a detailed description of Calegari's algorithm, thereby introducing relevant terminology (\ref{sec:calegari}).

In Section \ref{sec:chaptertwo}, we prove bounds on $\scl$ and the complexity of its computation. Words of \textit{reduced length} $m=2n$ in $\ZZ^\infty \ast \ZZ^\infty$ are most naturally expressed as:
$$\phi(x,y)= \mathbf{a}^\mathbf{x_1} \cdot \mathbf{b}^\mathbf{y_1} \cdot ... \cdot \mathbf{a}^\mathbf{x_n} \cdot \mathbf{b}^\mathbf{y_n}$$ for $x=\{\mathbf{x_j}\} , y=\{\mathbf{y_j}\}$ certain collections of nonzero vectors.

We start by proving that values in the set $\scl(\ZZ^\infty \ast \ZZ^\infty)\backslash \scl(\ZZ^r \ast \ZZ^s)$ cannot come from words that are ``too short":
\begin{Compactness Lemma} If $v\in \ZZ^\infty \ast \ZZ^\infty$ has reduced length $N$, its $\scl$ is already contained in the image $\scl(\ZZ^r \ast \ZZ^s)$ for all $r,s\geq N$.\end{Compactness Lemma}

More importantly, we give a lower bound on $\scl$ depending on the number of exponents we need to represent zero as a nontrivial sum (repetitions allowed). 
\begin{Lower Bound Theorem}
Let $w=\phi(x,y)\in \ZZ^\infty \ast \ZZ^\infty$ have reduced length $2n$. 

Fix $p,q \in \NN$, and assume that the following two implications hold:

If $(\lambda_j)_j \in \NN^n\backslash\{0\}$ is a vector with $\sum_j \lambda_j x^{(i)}_j =0$ for all $i$, then $\sum_j \lambda_j \geq p$.

If $(\mu_j)_j \in \NN^n\backslash\{0\}$ is a vector with $\sum_j \mu_j y^{(i)}_j =0$ for all $i$, then $\sum_j \mu_j \geq q$.

In this case, we have the inequality $\scl(w) \geq \frac{n}{2}(1-\frac{1}{p}-\frac{1}{q})$
\end{Lower Bound Theorem}

For each length, intersecting the polyhedra of all words of this length yields an upper bound on $\scl$ which is ``best possible in the limit":

\begin{Upper Bound Theorem} Write $C(m)$ for the supremum of the $\scl$ of words in $\ZZ^\infty\ast \ZZ^\infty$ of reduced length $m=2n>4$. Then this supremum is attained and satisfies
$$C(m)\leq   \left\{\begin{array}{cl} \frac{n}{2}-1, & \mbox{if $n$ odd}\\ 
\frac{n}{2}-\frac{(n-1)!-1}{n(n-2)!-2}, & \mbox{if $n$ even} \end{array}\right.$$
Moreover, given $\epsilon>0$, we have for $m$ sufficiently large: $\frac{m}{4}-1\leq C(m) \leq \frac{m}{4}-1+\epsilon$
\end{Upper Bound Theorem}

To state our main result precisely, we need to define what we mean by a \textit{``generic property"}. Recall the map $\phi$ from above, which associates a word of reduced length $m=2n$ to pairs $(x,y)$ of certain collections of vectors in the rank-$(n-1)$-module $V=\{z\in \ZZ^n|\sum_j z_j =0\}$.
\begin{definition}\label{def:generic}
Let $P$ be a property on words in the commutator of $\ZZ^\infty \ast \ZZ^\infty$. We say words of reduced length $m=2n$ \textit{generically satisfy} $P$ if there are finitely many submodules $W_1,...,W_l$ of $V$ of ranks at most $n-2$, such that whenever not all $x^{(i)}$ and not all $y^{(i)}$ lie in $\bigcup_k W_k$, the property $P(\phi(x,y))$ holds.
\end{definition} Welding the upper and the lower bound together, we conclude:
\begin{Generic Word Theorem}
Given any $\epsilon>0$, we can choose $N$ such that for all $m\geq N$, words $w$ of reduced length $m$ generically satisfy
$$ \scl(w)\in \left[\frac{m}{4}-1,\frac{m}{4}-1+\epsilon\right]$$
\end{Generic Word Theorem}
For efficiently encoded words in $F_2$, all known $\scl$ algorithms are computationally expensive. Here we elucidate that such expenditure arises not through the fault of the algorithms but from the intrinsic difficulty of the determination of $\scl$:
\begin{Complexity Theorem}
Unless $\P=\NP$, the $\scl$ of words $\phi(x,y)\in F_2$ cannot be computed in polynomial time in the input size of the vector $(x,y)$.
\end{Complexity Theorem}

In Section \ref{sec:chapterthree}, we analyse the geometry of the relevant flow-polyhedra. In order to express the main result precisely, we need to clarify what we mean by the ``abstract graph underlying a flow". 
\begin{definition} \label{MDgraphs} Take the smallest equivalence relation on the class of (finite) multi-digraphs (``MD-graphs") which is stable under subdivision of directed edges.

An MD-graph is called \textit{abstract} if it does not contain subdivided edges. Note that every MD-graph is equivalent to a unique abstract graph. 

\begin{figure}[h]
	\centering
		\includegraphics[width=0.30\textwidth]{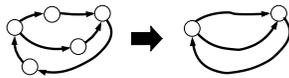}
				\caption{The abstract graph of an MD-graph}
	\label{fig:FIGURE1}
\end{figure}

The underlying abstract graph of a flow is the abstract graph of its support\footnote{The support of a flow is the digraph induced by the edges with nonzero flow.}\end{definition}

The general classification of extremal rays obtained by Calegari (see Lemma 4.11 in \cite{[4]}) implies that the abstract graphs underlying extremal rays are of an elegant simplicity - they are all isomorphic to one of the following three MD-graphs: 
\begin{figure}[h]
	\centering
		\includegraphics[width=0.50\textwidth]{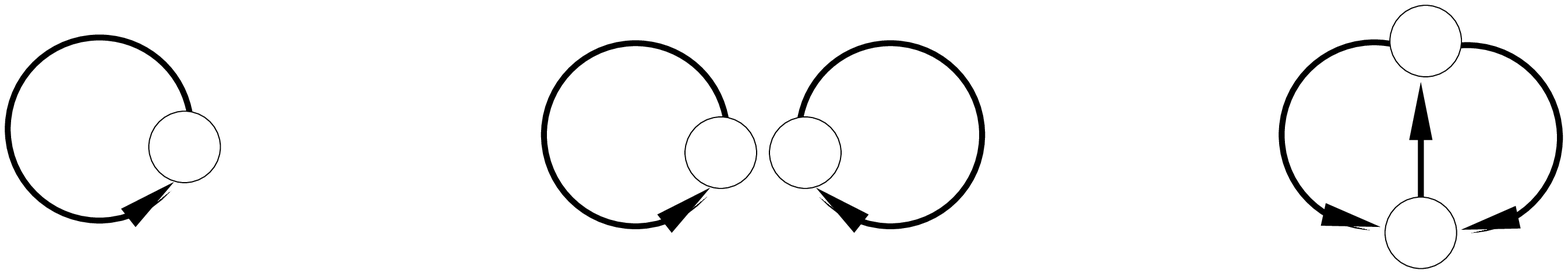}
		\caption{The three abstract graphs underlying extremal rays}
	\label{fig:FIGURE2}
\end{figure}

However, we prove that a classification of the extremal points which gives rise to any nontrivial restriction on the underlying abstract graphs cannot exist:

\begin{Non-Classifiability Theorem} For every connected, nonempty, abstract MD-graph $G$, there is an (alternating) word $w\in \ZZ \ast \ZZ$ and an extremal point $f$ of a flow-polyhedron associated to $w$ such that $G$ is the abstract graph underlying $f$.\end{Non-Classifiability Theorem}

The polyhedra in Calegari's algorithm arise as $P=\conv(D+V)$, where $V$ is the (understood) recession cone, and $D$ is an infinite integral subset of $V$. The aim is to find an efficient representation, and a very natural alternative to the vertex-ray-decomposition is the \textit{essential decomposition}: here, we use the minimal set $T\subset D$ (\textit{essential vectors}) with $T+V=D+V$ to encode $P$. Our final theorem indicates that this decomposition is computationally infeasible:
\begin{Essential Membership Theorem} The decision problem ``Given a word $w\in F_2$ and a vector $v$ in the corresponding cone, is $v$ essential?" is $\coNP-$ complete.
\end{Essential Membership Theorem}
\section{Background}\label{background}
We give a review of some basic properties of $\scl$ and relevant previous work. 

\subsubsection{Word-parametrisation $\phi$}
First, we introduce effective notation for words in the group $\ZZ^\infty \ast \ZZ^\infty$, which is the fundamental group of the wedge of two spaces. Without losing generality for our purposes, we will assume that all mentioned words are elements of the commutator subgroup of $G=A\ast B=\ZZ^\infty \ast \ZZ^\infty$ which start in $A$ and end in $B$. The following is the central invariant of words in our group; it measures how often a loop switches from one space to another:

\begin{definition} \label{def:reduced}
Every word $w$ can be written as $w=u_1 v_1 ... u_n v_n$ with $u_i \in A\backslash \{1\}$ and $v_i \in B\backslash \{1\}$. We define the \textit{reduced wordlength} (or, more concisely, reduced length) of $w$ to be $2n$. 
\end{definition}
We will examine the map $\phi$ introduced in \ref{sec:intro} in more detail. Let $U_k$ ($k\in \NN$) be copies of the space $U=\{z\in \ZZ^n| \sum_j z_j =0\}$ and define 
$$M_n=\left\{\left(z^{(1)},z^{(2)},...\right) \in \displaystyle \bigoplus_{k=1}^\infty U_k \bigg| \ \forall j\in \{1,...,n\} \ \ \exists i\in \NN: z^{(i)}_j\neq 0\right\}$$
For $z\in M_n$, we write $\mathbf{z_j}=(z_j^{(1)}, z_j^{(2)},...)$. The map $\phi$ from \ref{sec:intro} then gives a bijection between $M_n \times M_n$ and words of reduced length $2n$, which is given explicitly by:
$$\phi(x,y)= \left(a_1^{x^{(1)}_1} a_2^{x^{(2)}_1}...\right) \cdot \left(b_1^{y^{(1)}_1} b_2^{y^{(2)}_1}...\right) \cdot ... \cdot 
\left(a_1^{x^{(1)}_n} a_2^{x^{(2)}_n}...\right) \cdot \left(b_1^{y^{(1)}_n} b_2^{y^{(2)}_n}...\right)$$
For a loop $\gamma$ suitably representing $\phi(x,y)$, the integer $x^{(i)}_j$ can be geometrically interpreted as the number of times our loop walks along the $i^{th}$ generator in the left space between the $(2j-1)^{th}$ and $(2j)^{th}$ passage through $\ast$. A shifted statement holds for $y^{(i)}_j$ and the right space.

\subsection{Definition of Stable Commutator Length}\label{sec:scl}
We give a very accessible, algebraic definition and sketch an equivalent, more motivated topological one.
\begin{definition} Let $G$ be a group and $g\in [G,G]$.
The \textit{commutator length} $\cl(g)$ is defined to be the least number of commutators in $G$ whose product is $g$.

We stabilise this definition and define the \textit{stable commutator length} of $g$ to be: $$\scl(g)=\displaystyle \lim_{n\rightarrow \infty} \frac{\cl(g^n)}{n}$$
\end{definition}

There is a close link between commutator length and the least-genus problem mentioned initially, which yields a \textit{purely topological definition of $\cl$}:

\begin{proposition}
Let $(X,x)$ be a pointed topological space with fundamental group $G=\pi_1(X,x)$. Assume moreover that $\gamma$ is a based loop with homotopy class $g$. Then $\cl(g)$ is the least genus of a once-punctured, orientable, compact, and connected surface $S$ which can be mapped to $X$ such that $\partial S$ wraps once around $\gamma$. 
This follows directly from well-known classification of compact surfaces.
\end{proposition}

This result can be extended to obtain a similar \textit{topological definition of $\scl$}. It describes $\scl$ as a measure of how simple a surface (rationally) bounding a given loop can be, where the meaning of ``simple" is slightly tweaked:

A map $f:S\rightarrow X$ from a compact orientable surface $S$ is called \textit{admissible for a loop $\gamma$} if $f$ wraps the boundaries of $S$ around $\gamma$.
To such a map, we associate the quotient $\frac{\sum_j|\min(\chi(S_j),0)|}{2n}$, where $\chi(S_j)$ is the Euler characteristic of the connected components $S_j$ of $S$, and $n$ is the degree with which $f$ wraps $\partial S$ around $\gamma$. Then $\scl([\gamma])$ is given by the infimum of this quotient over all admissible maps $f$. 

Moreover, $\scl$ can be extended to homologically trivial chains on our group $G$. One can use this to continuously extend $\scl$ to the group $B_1(G)$ of $1-$boundaries in the real group homology of $G$. In many relevant cases, this extension even descends to a norm on a suitable quotient of $B_1(G)$.

However, the precise formulation of both of these definitions requires more technical care and we therefore refer the reader to the sections $2.1$ and $2.6$ in \cite{[3]}. We also recommend section $2.4$, which establishes a close connection between $\scl$ and bounded cohomology.

\subsection{Calegari's Algorithm}\label{sec:calegari}
This algorithm enables the computation of $\scl$ in free products of free abelian groups. For the sake of notational convenience, we will restrict ourselves to the specific case of two factors and to words rather than chains.
Our group is then the fundamental group of the wedge $X$ of two tori, hence we can represent its elements by loops in these spaces.
\begin{figure}[htbp]
	\centering
		\includegraphics[width=0.35\textwidth]{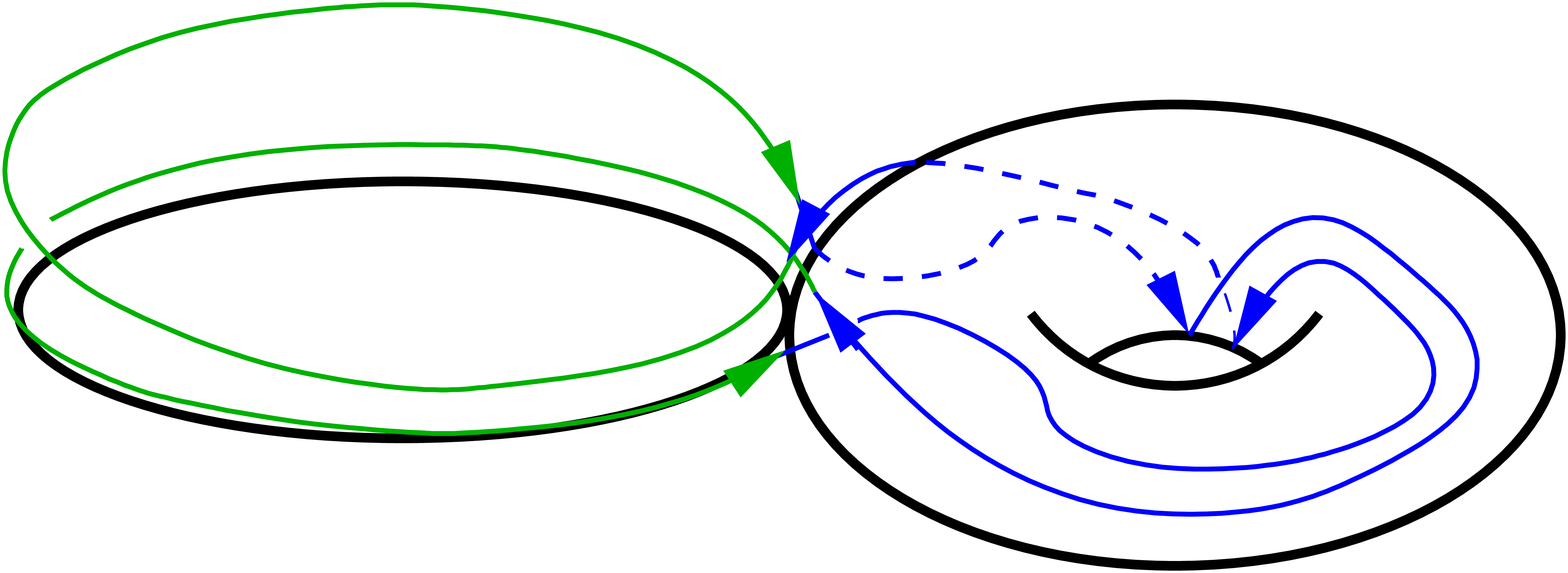}
		\caption{Loop representing $w=a_1 b_1 b_2 a_1^{-1} b_1^{-1} b_2^{-1} \in \ZZ\ast \ZZ^2$ with $\scl(w)=\frac{1}{2}$}
	\label{fig:CT}
	
\end{figure}

The algorithm proceeds in three steps: To a given word $\phi(x,y)$ of length $m=2n$, it first associates two complete digraphs on $n$ vertices. Special flows on these two graphs define two polyhedral cones equipped with $1-$homogeneous functions. The sum of these functions then has to be maximized over a subset to compute $\scl$.

\subsubsection{Sketch of proof}
The proof of Calegari's algorithm exploits the topological nature of $\scl$. Let $\gamma$ be a loop in $X$ which nicely represents $\phi(x,y)$. Given an admissible map $f:S\rightarrow X$, we cut our surface $S$ along the preimage $f^{-1}(\{\ast\})$ of the gluing point into two \textit{simple components}. Therewith, we decouple the left and the right half of our loop temporarily. 
The combinatorics of the boundaries of the two simple components give rise to a pair of flow-vectors $(v_A,v_B)$ on $n$ vertices. This pair carries all the $\scl$-relevant information we can extract from $f$. Homological triviality of the left and right half of $\gamma$ imply that $(v_A,v_B)$ lies in the Cartesian product of two polyhedral cones $V(x)$, $V(y)$. These cones define the crucial flow-polyhedra $P(x)$, $P(y)$, whose boundaries are the unit sets of the $1-$homogeneous Klein-functions $\kappa_x$, $\kappa_y$. A detailed analysis finally shows that $\scl$ can be computed by maximizing $\kappa_x+\kappa_y$ over a certain compact subset of $V(x)\times V(y)$. 

In the remainder of this section, we will give a precise formulation of the very technical terms used in this sketch. We will give entirely self-contained definitions, which do not depend on the sketched topological background.

\subsubsection{Flow-polyhedron $P$}
We introduce necessary graph-theoretic terminology:
\begin{definition} Let $G_n$ be the \textit{complete digraph} with $n$ vertices $[n]=\{1,...,n\}$. Given a vertex $i\in [n]$ and a map $f: [n]^2 \rightarrow \RR$ on edges, we define the inflow and outflow of $f$ at $i$ by $\inflow_i(f)=\displaystyle \sum_j f_{ji}$ and $\outflow_i(f)=\displaystyle \sum_j f_{ij}$.  Here $f_{ij}=f(i,j)$ denotes the value of the map $f$ on the directed edge from $i$ to $j$.

A nonnegative map $f$ on edges is called a \textit{flow} if $\inflow_i(f)=\outflow_i(f)$ at all vertices. We write $W_n\subset \RR_{\geq 0}^{n^2}$ for the cone of such flows.
\end{definition}
We now define a connectedness-notion on MD-graphs:
\begin{definition} An MD-graph is \textit{connected }if for all vertices $i,j$, there is a directed path from $i$ to $j$. The graph is \textit{weakly connected} if replacing all directed edges by undirected ones turns it into a connected undirected graph.
\end{definition}
Fix $z\in M_n$. The following objects are the key ingredients in the definition of $P$:

\begin{definition} \label{disc}
We introduce the \textit{weight-function} $h_z: W_n \rightarrow \RR^\NN$ on flows as: $$(h_z(f))_i=\displaystyle \sum_{j=1}^n z^{(i)}_j  \outflow_j(f)$$ Its vanishing will mirror homological triviality of a half of the loop on the level of the representing vectors.

The cone $V(z)$ is defined as the set of flows for which $h_z$ vanishes. 
The nonzero integral vectors in $V(z)$ with connected support form the set $D(z)$ of \textit{disc-vectors}.
\end{definition}

We are now in a position to define the initially mentioned, crucially important flow-polyhedron $P(z)$ examined in our paper and the function $\kappa_z$ it determines: 

\begin{definition}
The rational, \textit{a posteriori} finite sided flow-polyhedron $P(z)$ is defined as $P(z)=\conv(D(z)+V(z))=\conv(D(z))+V(z)$. 

The \textit{sail} $S(z)$ is the boundary of this polyhedron. 

The \textit{Klein-function} $\kappa_z$ is the unique $1-$homogeneous function satisfying:  
\begin{itemize}
  \item If the ray $[v]$ passing through $v\in V$ has $[v]\cap S(z)=\emptyset$, then $\kappa_z(v)=0$.
  \item If $v$ is the closest point to $0$ in $[v]\cap S(z)$, then $\kappa(v)=1$.
\end{itemize}

\end{definition}
Roughly speaking, $\kappa_z$ is the $1$-homogeneous function whose unit set is the sail.

There is a more practical definition of $\kappa_z$ established in Lemma $3.10$ of \cite{[4]}: 
\begin{lemma} \label{sec:expressions} 
For $v\in V(z)$, an admissible expression is defined to be a representation of the form $v=\sum_j t_j d_j +v'$, where $d_j \in D(z), v' \in V(z)$ and $t_j >0$. Then $\kappa_z(v)=\sup(\sum_j t_j)$, where the supremum runs over all admissible expressions.
\end{lemma} 

\subsubsection{Calegari's formula}
If the surface $S$ rationally bounds $[\gamma]=\phi(x,y)$, the representing pair of vectors $(v_A,v_B)$ must not only lie in $V(x) \times V(y)$, but also be \textit{paired}. This property reflects that the two simple components can be glued back together and is defined as follows:
\begin{definition} For $(x,y)\in M_n\times M_n$, define the set of \textit{paired vectors} to be \footnote{We adopt the convention that $1-1=n$ here} $$Y(x,y)=\{(v_A,v_B)\in V(x)\times V(y) \ | \ \forall i,j: (v_A)_{ij}=(v_B)_{(j-1) i}\}$$  
Moreover, we define the compact set of \textit{unit-outflow vectors} as: $$Y_1(x,y)=\{(v_A,v_B)\in Y(x,y) | \forall i: \outflow_i(v_A)=\outflow_i(v_B)=1\}$$
Notice that $Y_1^n:=Y_1(x,y)$ is independent of $(x,y)\in M_n\times M_n$.
\end{definition}
We can finally reap the benefits of our endeavors and state Calegari's formula:
\begin{theorem} (Calegari)\label{theorem:calegari}
If $g=\phi(x,y)\in \ZZ^\infty \ast \ZZ^\infty$ for $x,y\in M_n \times M_n$, then:
$$\scl(g)=   \frac{1}{2} \left( n -\displaystyle \max_{(v_A,v_B)\in Y_1(x,y)} (\kappa_x(v_A) + \kappa_y(v_B)) \right)$$
\end{theorem}
\section{Estimates} \label{sec:chaptertwo}
This section is divided into three parts: we first pursue the relative approach with linear-algebraic means and then give upper and lower bounds which allow us to determine the generic value of $\scl$. 
In the final section, we prove that the determination of $\scl$ is hard in a precise sense.
\subsection{Linear Algebra of Exponents}
The first, easily proven bound relates the $\scl$ of certain words of same reduced length.
\begin{lemma} (Inequality)\label{Inequality Lemma} Let $v=\phi(r,s)$ and $w=\phi(t,u) \in \ZZ^\infty \ast \ZZ^\infty$ have equal reduced length $m=2n$, and assume that there are inclusions of $\ZZ$-modules:
 $$\langle \{r^{(i)}\} \rangle \subseteq \langle \{t^{(i)}\}\rangle \mbox{ and }\langle \{s^{(i)}\} \rangle \subseteq \langle \{u^{(i)}\} \rangle $$
 Then, we have the inequality $$ \scl(v) \leq \scl(w)$$\end{lemma}
\begin{proof}
Observe that for $z\in M_n$ and $f$ a flow, the function $h_z$ vanishes on $f$ if and only if the vector $(\outflow_j(f))_j$ is perpendicular to all vectors $\{z^{(i)}\}$. This immediately gives the inclusion $V(r) \times V(s) \supseteq V(t) \times V(u)$. With Lemma \ref{sec:expressions}, we then see that $\kappa_r+\kappa_s \geq \kappa_t+\kappa_u$, wherever defined. Since the sets over which we are maximising are both equal to $Y_1^n$, Calegari's formula gives the result. 
\end{proof}

The lemma shows that the map $\scl \circ \phi : M_n \times M_n \rightarrow \RR$ \textit{factors through spaces}, i.e.~that $\langle \{r^{(i)}\}\rangle \times \langle \{s^{(i)}\}\rangle = \langle \{t^{(i)}\}\rangle \times \langle \{u^{(i)}\}\rangle \mbox{ implies } \scl(\phi(r,s))=\scl(\phi(t,u))$
\begin{example} This property can help us in concrete cases, e.g.~we can see that:
$$\scl( a_1^2 a_2^{-3} b a_1^{-2} a_2^1 b a_1^3 a_2^1  b a_1^{-3} a_2^1 b^{-3})
=\scl( a_1^2 a_2^{-3} a_3^5 b a_1^{-2} a_2^1  a_3^{-3}   b a_1^3   a_2^1  a_3^2 b a_1^{-3}   a_2^1   a_3^{-4}  b^{-3})$$
since $(5, -3, 2, -4)=(2, -2, 3, -3) - (-3, 1, 1, 1)$.
\end{example}

The last fact can be used to prove the Compactness Lemma, which touches the ``inverse problem". Assume $w$ is a word with short reduced length which uses a large number of generators of the free abelian factors. Then our next lemma shows that we actually only need a small number of generators to realise the $\scl$ of $w$:
\begin{lemma}(Compactness) If $w=\phi(x,y)\in \ZZ^\infty \ast \ZZ^\infty$ has reduced length $2N$, its $\scl$ is already contained in the image $\scl(\ZZ^r \ast \ZZ^s)$ for all $r,s\geq N$.\end{lemma}
\begin{proof} Since $\ZZ^N$ is a free module of rank $N$ over a principal ideal domain, the two submodules generated by $\{x^{(i)}\}$ and $\{y^{(i)}\}$ respectively are free of rank at most $N$.
Therefore, we can choose generating sets $\{r_1,...,r_N\}$ and $\{s_1,...,s_N\}$.

Consider $w_0=\phi((r_1,...,r_N,0,...),(s_1,...,s_N,0,...))$. Since the map $\scl \circ \phi$ factors through spaces, we have $\scl(w)=\scl(w_0)$. We complete the proof by noticing that for all $n,m\geq N$, the word $w_0$ lies in the image of the obvious $\scl-$preserving inclusion $\ZZ^r \ast \ZZ^s \rightarrow \ZZ^\infty \ast \ZZ^\infty$.
\end{proof}

\subsection{Generic Value of scl}
In order to determine the generic behaviour of $\scl$, we will need to bound it from above and below. 

The first theorem of this section provides a lower bound on $\scl$ in terms of a subset-sum type problem determined by the exponents of our word. The immediate relation between the outer form of the word and the type of the bound makes it particularly powerful, as can be seen in the remainder of this section.

\begin{theorem}(Lower Bound) \label{Lower Bound Theorem}
Let $w=\phi(x,y)\in \ZZ^\infty \ast \ZZ^\infty$ have reduced length $2n$. 

Fix $p,q \in \NN$, and assume that the following two implications hold:

If $(\lambda_j)_j \in \NN^n\backslash\{0\}$ is a vector with $\sum_j \lambda_j x^{(i)}_j =0$ for all $i$, then $\sum_j \lambda_j \geq p$.

If $(\mu_j)_j \in \NN^n\backslash\{0\}$ is a vector with $\sum_j \mu_j y^{(i)}_j =0$ for all $i$, then $\sum_j \mu_j \geq q$.

In this case, we have the inequality $\scl(w) \geq \frac{n}{2}(1-\frac{1}{p}-\frac{1}{q})$
\end{theorem}

\begin{proof} The main idea behind this proof is to use the given implications to show that the disc-vectors have to contain a ``large amount of flow".

By compactness, there is a vector $(v_A,v_B)\in Y_1(x,y)$ maximising $\kappa_x+\kappa_y$. Let $v_A=\sum_k t_k d_k + v'$ be any admissible expression with $t_k >0$, $d_k \in D(x), v' \in V(x)$. 

\begin{claim} There is an inequality $\kappa_x(v_A)\leq \frac{n}{p}$ \end{claim}
\begin{proof}[Proof of claim]
The proof proceeds in three steps:

\textbf{Step (1):}  By definition, the function $h_x$ vanishes on disc-vectors. This means that $(h_x(d_k))_i=\sum_j x^{(i)}_j \outflow_j(d_k)=0$ for all $i,k$.
Using the first implication of the theorem, we conclude that $\sum_j \outflow_j(d_k) \geq p$ for all $k$.

\textbf{Step (2):}  Since $(v_A,v_B)$ lies in the set $Y_1(x,y)$, we have $\outflow_j(v_A)=1$ for all vertices $j$.
Using our admissible expression from above, we conclude that $\sum_k t_k \outflow_j(d_k) \leq 1$ for all $j$.

\textbf{Step (3):} We swap the order of summation:
$$p\sum_k t_k \leq \sum_k \sum_j t_k \outflow_j(d_k) = \sum_j \sum_k t_k \outflow_j(d_k) \leq n$$
By Lemma \ref{sec:expressions}, we then have $\kappa_x(v_A)=\sup( \sum_k t_k)$, where the supremum runs over all admissible expressions. This implies the claim. \end{proof}

Similarly, we prove $\kappa_y(v_B) \leq \frac{n}{q}$. The result follows by Calegari's formula \ref{theorem:calegari}.\end{proof}
A major open problem in the theory of $\scl$ is to prove the conjecture that $\scl(F_2)$ contains every rational number $q\geq 1$. By Lemma $3.18$ in \cite{[5]}, if $s,t\in \scl(F_2)$, then also $s+t+\frac{1}{2}\in F_2$. Writing $q=\frac{1}{2}+(q-1)+\frac{1}{2}$, we see that the above conjecture is equivalent to:
\begin{conjecture}(Interval) The image $\scl(F_2)$ contains every rational number in the interval $[1,2]$.
\end{conjecture} 

The next corollary of the Lower Bound Theorem gives an indication where to look for these $\scl$-values: Either they come from short words or from words which walk along the same subloop twice in opposite directions (like in figure \ref{fig:CT}).

\begin{corollary}Let $w=\phi(x,y)$ be a word of reduced length $m=2n$ with $\scl(w)\in [1,2]$. Then either $m\leq 24$ or there are distinct indices $j_1,j_2$ in $\{1,...,n\}$ such that we have one of the two following identities of vectors:
$$(x^{(i)}_{j_1})_i = -(x^{(i)}_{j_2})_i \mbox{  and   }  (y^{(i)}_{j_1})_i = -(y^{(i)}_{j_2})_i$$
\end{corollary}
\begin{proof}
Suppose that the second possible conclusion does not hold. Since all vectors $(x^{(i)}_j)_i, (y^{(i)}_j)_i$ are nonzero, we can apply the Lower Bound Theorem \ref{Lower Bound Theorem} with the values $p=q=3$ to obtain $2\geq \scl(w)\geq \frac{n}{6}$ \end{proof}
After having bounded $\scl$ from below, we will now proceed to give an upper bound purely in terms of the reduced wordlength. This bound is sharp in the limit and will be the second key ingredient in the determination of the generic behaviour of $\scl$ in Theorem \ref{Generic Word Theorem}.

\begin{theorem}(Upper Bound)\label{Upper Bound Theorem} Write $C(m)$ for the supremum of the $\scl$ of words in $\ZZ^\infty\ast \ZZ^\infty$ of reduced length $m=2n>4$. Then this supremum is attained and satisfies
$$C(m)\leq   \left\{\begin{array}{cl} \frac{n}{2}-1, & \mbox{if $n$ odd}\\ 
\frac{n}{2}-\frac{(n-1)!-1}{n(n-2)!-2}, & \mbox{if $n$ even} \end{array}\right.$$
Moreover, given $\epsilon>0$, we have for $m$ sufficiently large: $\frac{m}{4}-1\leq C(m) \leq \frac{m}{4}-1+\epsilon$
\end{theorem}
\begin{proof}
For a given reduced length $2n$, we will intersect all cones $V(x)$ to obtain a word $w_n$ whose $\scl$ is maximal for this length.
More precisely, define $z\in M_n$ by:
$$z^{(1)}=	\begin{pmatrix} 1&-1&...&0 \end{pmatrix},\ ...\ ,
z^{(n-1)}=	\begin{pmatrix} 1&0&...&-1 \end{pmatrix},
z^{(n)}=z^{(n+1)}=\ ...\ =0$$
Then the universally bounding word is given by $w_n=\phi(z,z)$.

For all $x,y \in M_n$, we have: $\langle \{x^{(i)}\} \rangle, \langle \{y^{(i)}\} \rangle \subseteq \langle \{z^{(i)}\}\rangle = \{f\in \ZZ^n|\sum_j f_j = 0\}$.
Therefore, the Inequality Lemma \ref{Inequality Lemma} gives that $\scl(w)\leq \scl(w_n)$ for all words $w$ of reduced length $2n$.

We will now bound $\scl(w_n)$. Notice that $V(z)=\displaystyle \bigcap_{x\in M_n} V(x)$ is exactly the set of flows $f$ on $G_n$ for which $\outflow_i(f)$ is equal at all vertices $i$. In particular, all Hamiltonian cycles on $\{1,...,n\}$ are disc-vectors.
We split cases:

\textbf{n odd:} Let $v_A$ be the flow corresponding to the cycle $1, 3, 5 ,..., 2, 4, 6 ,... , 1$. We are then forced by the pairing condition to define $v_B$ to be the flow corresponding to 
$1, n, n-1,...,2,1$. Clearly, we have $(v_A, v_B)\in Y_1^n$, and since both vectors $v_A, v_B$ are disc-vectors, we also conclude that $\kappa_z(v_A),\kappa_z(v_B)\geq 1$. Calegari's formula \ref{theorem:calegari} and the Lower Bound Theorem $\ref{Lower Bound Theorem}$ then imply: $$\scl(w_n) = \frac{n}{2} -1$$

\textbf{n even:}  Let $v$ be the vector obtained by adding the $n$ possible rotations of:
\begin{figure}[ht!]
\centering
\labellist
\pinlabel $(n-2)!-1$ at -15 170
\pinlabel $(n-2)!-1$ at 162 148
\pinlabel $(n-2)!$ at 185 105
\pinlabel $(n-2)!$ at 175 40
\pinlabel $(n-2)!$ at 123 2
\pinlabel $(n-2)!$ at 52 2
\pinlabel $(n-2)!$ at -8 40
\pinlabel $(n-2)!$ at -17 91
\endlabellist
\includegraphics[scale=0.6]{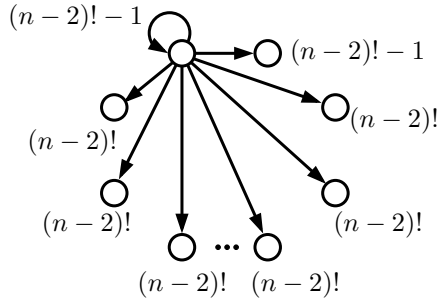}
\caption{Vector in $\RR^{n^2}$; the vertices $\{1,...,n\}$ are arranged clockwise. The number attached to a vertex gives the value on the edge going into this vertex.}
\label{fig:rotation}
\end{figure}

Then $v$ is a flow in $V(z)$ with $\outflow_i(v)=n (n-2)!-2$. Notice that $v$ contains the flow which is obtained by summing up all possible Hamiltonian cycles except for $1,2,...,n,1$.
Since there are $(n-1)!-1$ of them, Lemma \ref{sec:expressions} implies that $\kappa_z(v)\geq (n-1)!-1$. We observe that $(v,v)\in Y(z,z)$ is a paired vector, so
$(v_A,v_B):=(\frac{v}{n (n-2)!-2},\frac{v}{n (n-2)!-2})\in Y_1(z,z)$ lies in the set we need to maximise over. 
We now use Calegari's formula \ref{theorem:calegari}, the $1-$homogeneity of $\kappa_z$ and the Lower Bound Theorem $\ref{Lower Bound Theorem}$ to conclude that $\frac{n}{2}-1 \leq \scl(w_n)\leq \frac{n}{2} - \frac{(n-1)!-1}{n(n-2)!-2}$.

We finally observe that $C(n)=\scl(w_n)$, which completes the proof.\end{proof}

\begin{remark}
For $n=4$, we have $\scl(w_4) = \frac{7}{6}=2-\frac{(4-1)!-1}{4(4-2)!-2}$ (computed with the implementation of Calegari's algorithm by A. Walker \cite{[7]}) and hence the estimate for even $n$ is sharp in at least one case.
A precise computation of $C(n)=\scl(w_n)$ for general even $n$ remains an open problem.\end{remark}

Finally, we now have all the necessary tools at our disposal to prove the main theorem of this section and determine the generic behaviour of $\scl$. Recall the definition \ref{def:generic} of \textit{generic properties}.

\begin{theorem}(Generic Words) \label{Generic Word Theorem}
Given any $\epsilon>0$, we can choose $N$ such that for all $m\geq N$, words $w$ of reduced length $m$ generically satisfy
$$ \scl(w)\in \left[\frac{m}{4}-1,\frac{m}{4}-1+\epsilon\right]$$
\end{theorem}
\begin{proof}
Let $\epsilon>0$. By the Upper Bound Theorem \ref{Upper Bound Theorem}, we can pick $N$ such that for all $2n\geq N$, words $w$ of length $2n$ have $\scl(w)\leq\frac{n}{2}-1+\epsilon$. Fix $n$ and set $V=\{z\in \ZZ^n| \sum_j z_j =0\}$. For each of the finitely many vectors $\lambda \in \NN^n$ with $0<\sum_j \lambda_j < n$, define the space $W_\lambda$ by:
$$W_\lambda = \left\{ z\in V \bigg| \sum_j \lambda_j z_j=0\right\} = \ker \begin{pmatrix} 1&1&...&1 \\ \lambda_1 & \lambda_2 & ... & \lambda_n  \end{pmatrix}$$
Since the rows of this matrix are linearly independent, the submodule $W_\lambda$ has rank at most $n-2$.

Let $w=\phi(x,y)$ be any word of reduced length $2n$. If not all $x^{(i)}$ and not all $y^{(i)}$ lie in 
$\displaystyle \bigcup_\lambda W_\lambda$, then the Lower Bound Theorem \ref{Lower Bound Theorem} helps us to prove that $\scl(w)\geq \frac{n}{2}-1$. The theorem follows.
\end{proof}
\begin{remark} Calegari's algorithm also applies to free products of $k>2$ free abelian groups, and it is natural to ask which results carry over to this more general setting. Let $w$ be a word with $n_i$ nontrivial loop segments in the $i^{th}$ group of our product.

Again, we have $k$ flow-polyhedra $P_1,...,P_k$, where $P_i$ is constructed from the exponents of letters in the $i^{th}$ group by the same procedure as before. The definition of the associated functions $\kappa_1,...,\kappa_n$ also carries over. To compute $\scl$, we have to maximize their sum over a compact subset $Y_1$ of the product of all $P_i$. This sum is obtained by first restricting to unit-outflow vectors, and then imposing a ``gluing condition", whose exact form is more complicated than before (see \cite{[4]}).

The Lower Bound Theorem generalises: Assume we are given a word $w$ of length $m$ whose exponents in the $i^{th}$ free factor are $^i\mathbf{x}_{1}$,$...$,$^i\mathbf{x}_{n_i}$, and such that we need to sum at least $p_i$ of these exponents (with repetitions) to obtain zero. By the same estimates on $\kappa_i$ as before, we obtain: $$\scl(w)\geq \frac{m}{4}-\frac{1}{2}\sum_i \frac{n_i}{p_i}$$

However, the proof of the Upper Bound Theorem breaks down in this more general setting. Our strategy for $k=2$ was to give every edge roughly the same weight so that the pairing condition holds independently of the precise form of the word. The vector obtained in this way had equal outflow at all vertices since both graphs had the same cardinality, so it was possible to rescale and obtain a required unit-outflow vector. 

For $k>2$, the cardinalities of the involved graphs are usually very different, and therefore this vector cannot be rescaled anymore to have unit outflow everywhere.

Therefore the proof of the Generic Word Theorem does not generalise. We can still deduce from the generalised Lower Bound Theorem that words $w$ with $n_i$ nontrivial loop segments in the $i^{th}$ group generically have $\scl(w)\geq \frac{m}{4}-\frac{k}{2}$.
\end{remark}
\subsection{Complexity of Computing scl}
We will give a lower bound on the \textit{algorithm independent} complexity of computing $\scl$ of efficiently encoded words in $F_2$:
\begin{theorem}(Complexity)\label{complexity} Unless $\P=\NP$, the $\scl$ of words $\phi(x,y)\in F_2$ cannot be computed in polynomial time in the input size of the vector $(x,y)$.\end{theorem}

After briefly reviewing basic complexity-theoretic notation and previous results on $\scl$, the main aim of this section is to prove this Complexity Theorem with the techniques from the preceding section. 
\begin{remark}
An algorithm is \textit{polynomial} if it runs in time polynomially bounded in terms of the size of its input.

A decision (or promise) problem is said to be polynomially solvable if there is a \textit{polynomial} algorithm that solves it. We say the problem lies in the class \P.

A decision (or promise) problem lies in the complexity class \NP{} if a solution to the problem can be checked in polynomial time in the input size. The problem is said to be \NP-complete if it lies in \NP{} and if no other problem in this class is harder in a precise sense.

The input size of a computational task is the number of bits required to encode the input binarily. For example, a vector $(x_1,...,x_n)$ of natural numbers requires roughly $\sum_i \log_2(x_i)$ bits.
\end{remark}

\begin{remark}
There is a simple measure of length in the free group $F_2=\ZZ \ast \ZZ$ other than the reduced length from \ref{def:reduced}: the \textit{classical length} of a word is the number of letters in a shortest representation. This notion does not naturally extend to groups $\ZZ^r\ast \ZZ^s$ for $r>1$ or $s>1$ as it depends on a choice of bases for the factors. The alternative $\scl$-algorithm presented in section $4.1.7$ of \cite{[3]} shows that $\scl$ can be computed in polynomial time in the \textit{classical length}. More precisely, if we encode words as binary strings with pairs of entries representing generators / their inverses, then we can compute $\scl$ in polynomial time in the size of this (very large) input.
 
However, it is artificial and inefficient to waive the use of exponents and use unary coding, so for example to write $aaaaababa^{-1}a^{-1}a^{-1}a^{-1}a^{-1}a^{-1}b^{-1}b^{-1}$ for the word $a^5b ab a^{-6} b^{-2}$. But ``using exponents" is just the colloquial term for encoding our words via the map $\phi$, adapted to $F_2$. We will therefore consider the problem of computing $\scl(\phi(x,y))$ for $x,y\in (\ZZ\backslash \{0\})^n$ as input. Our proof will show that unless $\P=\NP$, there is no algorithm which solves this problem in polynomial time in the size of the vector $(x,y)$. Notice that this is strictly stronger than just saying that unless $\P=\NP$, $\scl$ cannot be computed in polynomial time in the reduced length.
\end{remark}

\subsubsection{Complexity-theoretic Notation} 
The following classical problem is known to be $\NP-$complete and will be the starting point of our complexity-theoretic analysis:
\begin{problem}
\SUBSETT{}  Given $(r_1,...,r_n)\in \ZZ^n$. Is there a nonzero vector $(\lambda_1,...,\lambda_n)\in \{0,1\}^n$ with $\sum_{j}\lambda_j r_j = 0$? \end{problem}

This problem can be modified in several different ways, and we will now give names to the variations we need.

We first restrict ourselves to cases where the whole input is known to sum up to zero and ask for proper nonempty subsets whose sum is zero: 
\begin{problem}\SUBSETTP{} Given $(r_1,...,r_n)\in \ZZ^n$ with $\sum_j r_j = 0$. Is there a vector $(\lambda_1,...,\lambda_n)\in \{0,1\}^n$ with $0<\sum_j \lambda_j <n$ and $\sum_{j}\lambda_j r_j = 0$? \end{problem}

We vary this problem and allow the repeated use of individual $r_j$'s, but keep the total number of employed $r_j$'s bounded: 
\begin{problem} \VARSUBSETTP{} Given $(r_1,...,r_n)\in \ZZ^n$ with $\sum_j r_j = 0$.
Is there a vector $(\lambda_1,...,\lambda_n)\in \NN^n\backslash \{0\}$ with $0<\sum_j \lambda_j < n$ and $\sum_j \lambda_j r_j =0$? \end{problem}

We combine two problems and obtain the following promise problem:
\begin{problem} \MIXEDSUBSETTP{} \label{mixedsubset}  Given $(r_1,...,r_n)\in \ZZ^n$ with $\sum_j r_j = 0$ such that \SUBSETP \
holds iff \VARSUBSETP{} does. Are they satisfied?\end{problem}

Finally, we define the decision problem which will serve as the key gadget in the proof of the Complexity Theorem \ref{complexity}:
\begin{problem} \SMALLSCLT{} Given a vector $x\in (\ZZ \backslash\{0\})^n$ with $\sum_j x_j = 0$. Define $y=(1,..,1,-(n-1))\in M_n$.
Is it true that $\scl(\phi(x,y))<\frac{n}{2}-1$?
\end{problem}

\subsubsection{Proof of Complexity Theorem}
\begin{proof} Our aim is to polynomially reduce \SUBSET{} to \SMALLSCL. This proves the Complexity Theorem: if we could compute $\scl$ for words encoded with $\phi$ in polynomial time in the input size of the vectors, then it would be possible to answer \SMALLSCL{} and thus also \SUBSET{} in polynomial time.  This would then imply $\P=\NP$.

Our reduction passes through \MIXEDSUBSETP: A combinatorial argument due to F. Manners reduces \SUBSET{} to \MIXEDSUBSETP. A proof is attached in the Appendix.

Hence we are left with reducing \MIXEDSUBSETP{} to \SMALLSCL. This reduction relies on the Lower Bound Theorem \ref{Lower Bound Theorem} and the following following relation between $\scl$ and the \SUBSETP{} problem:
\begin{lemma}\label{subsetlemma} Let $x\in (\ZZ\backslash\{0\})^n$ be a vector with $\sum_j x_j = 0$. Assume there is a nonempty set $J\subset \{1,...,n\}$ of size $M\leq \frac{n}{2}$ with $\sum_{j\in J} x_j=0$ and moreover that neither $J$ nor $J^c$ are of the form \footnote{In this Lemma, we work with addition modulo $n$, picking representatives in $\{1,...,n\}$.} $\{k, k+1\}$.Then we have: $$\scl(a^{x_1}b...a^{x_{n-1}}ba^{x_n}b^{-(n-1)})\leq \frac{n}{2}-1-\frac{1}{(n-M-1)!} < \frac{n}{2}-1$$
\end{lemma}
\begin{proof} Set $y=(1,...,1,-(n-1))$. We will first give a unit-outflow vector $v_A\in V(x)$ with $\kappa_x(v_A)\geq 2$, and then find $v_B\in V(y)$ such that $(v_A,v_B)$ is paired and $\kappa_y(v_B)$ is large enough.

A pair of flow-vectors $(c,d)$ on $G_n$, the complete digraph on vertices $\{1,...,n\}$, shall be called \textit{$J-$pair} if $c$ represents a Hamiltonian cycle on $J$ and $d$ represents one such cycle on $J^c$. Observe that we can pick $N=(n-M-1)!$ such $J$-pairs $(c_1,d_1),...,(c_N,d_N)$ so that their sum $v=\sum_j (c_j+d_j)$ has nonzero flow on all edges inside $J$ and all edges inside $J^c$. The assumptions of this theorem tell us that the vector $v\in V(x)$ decomposes into at least $2N$ disc-vectors since the relevant sums vanish. Using Lemma \ref{sec:expressions}, we obtain that vector $v_A = \frac{v}{N}$ has $\kappa_x(v_A)\geq 2$. Define $$(v_B)_{ij}=(v_A)_{j (i+1)}$$ One checks easily that $v_B$ is a flow with unit outflow everywhere. Therefore, we know that $v_B\in V(y)$ and $(v_A,v_B)\in Y_1(x,y)$.

We are left with proving that $\kappa_y(v_B)\geq \frac{1}{N}$. It is enough to show that $v_B$ has connected support, since then $Nv_B$ is a disc-vector and we therefore have $\kappa_y(Nv_B)\geq 1$. A digraph is called \textit{weakly connected} if replacing all directed edges by undirected ones turns it into a connected graph. By Proposition \ref{weaklyconn} proven below, it is enough to show that the support of the flow $v_B$ is weakly connected.

If $I=\{p,..,(p+s)\}\subset J$ and $(p-1),(p+s+1) \notin J$, we say $I$ is an \textit{interval in $J$}. We have an obvious analogous definition for \textit{intervals in $J^c$}. Then $\{1,...,n\}$ decomposes into intervals $I_1,...,I_m$, where $I_j$ is in $J$ for $j$ odd and in $J^c$ for $j$ even.

Our goal is to show that given an interval $I_k=\{p,...,p+s\}$ in $J$, all points in $I_k$ lie in the same weakly connected component as $p-1$. Split cases:

\textbf{Case (1):} $s=0$. In this case, $(v_B)_{p (p-1)} = (v_A)_{(p-1) (p+1)}>0$ and so $p$ and $p-1$ are weakly connected.

\textbf{Case (2):} $s\geq 1$. By the form of $J$, we can pick $q\neq p,(p+1)$ with $q\in J$.
One checks that the edges indicated below have positive flow for $v_B$:
\begin{figure}[ht!]
\centering
\labellist
\pinlabel {\Huge $J$} at 222 218
\pinlabel {\Huge $J^c$} at 550 218
\pinlabel {\small $p$} at 115 285
\pinlabel {\small $p\! +\!\!1$} at 174 314
\pinlabel {\small $p\!+\!\!s$} at 278 314
\pinlabel {\small  $p\! -\!\!1$} at -20 349
\pinlabel {\small  $p\!+\!\!s\!+\!\!1$} at 425 405
\pinlabel {\small $q$} at 140 125
\endlabellist
\includegraphics[scale=0.25]{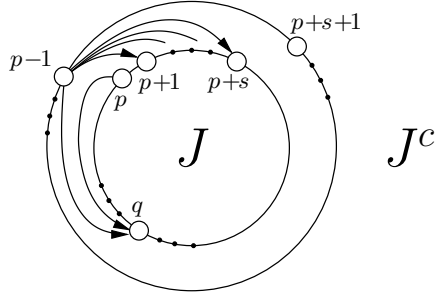}
\caption{Flow-vector in $\RR^{n^2}$; the vertices $\{1,...,n\}$ are arranged clockwise}
\label{fig:figure6}
\end{figure}

Exactly the same argument holds for intervals in $J^c$. Combining these two claims, we see that $I_k$ and $I_{k-1}$ lie in the same weakly connected component for all $k$. We go once around the circle to conclude that the weak support of $v_B$ is connected.
\end{proof}
We are now ready to reduce \MIXEDSUBSETP{} to \SMALLSCL:
\begin{lemma}
If we can solve \SMALLSCL{} in polynomial time, then we can also solve \MIXEDSUBSETP{} in polynomial time.
\end{lemma}
\begin{proof}
Given a problem instance $(r_1,...,r_n)\in \ZZ^n$, we can check in polynomial time if there is a $j$ with $r_j=0$ or $r_j+r_{j+1}=0$. 

If this is not the case, we compute 
$$s=\scl(a^{r_1}b...a^{r_{n-1}}ba^{r_n} b^{-(n-1)})$$ 

If $s<\frac{n}{2}-1$, then by the Lower Bound Theorem \ref{Lower Bound Theorem}, we can deduce that the problem \VARSUBSETP$(r_1,...,r_n)$ is true, hence so is \MIXEDSUBSETP$(r_1,...,r_n)$.

If $s\geq\frac{n}{2}$, then we use Lemma \ref{subsetlemma} to conclude that \SUBSET'$(r_1,...,r_n)$ and hence \MIXEDSUBSETP$(r_1,...,r_n)$ are both false.
\end{proof}
This concludes the proof of the Complexity Theorem \ref{complexity}. \end{proof}

\section{Polyhedra} \label{sec:chapterthree}
\subsection{Non-Classifiability Theorem}\label{classi}
Whereas the extremal rays of the $\scl-$poly-hedra $P(z)=\conv(D(z)+V(z))$ have been classified in a satisfactory manner (see Figure \ref{fig:FIGURE2}), such a description could not be found for their extremal points. Our main theorem, whose proof will occupy most of this section, gives a reason for this:

\begin{theorem}(Non-classifiability) \label{classiclassi} Let $G$ be a connected MD-graph with $M$ vertices and $E>0$ edges. Then there is an (alternating) word $w\in \ZZ \ast \ZZ$ of length $m=4(M+3E^{3ME+1})$ and an extremal point $f$ of a flow-polyhedron associated to $w$ such that $G$ is the abstract graph underlying $f$.
\end{theorem}

We start this section with a brief treatment of extremal points, and then present a more general version of the Non-Classifiability Theorem, deducing the specific case from there. In the third part, we then prove the generalised theorem.
\subsubsection{Extremal Points of Polyhedra}
The next definition is central:
\begin{definition} Let $S\subset \RR^n$ be a subset. A vector $x\in S$ is called an \textit{extremal point of S} if $x$ cannot be written as a nontrivial convex combination of other vectors in $S$, i.e.~if $d=\displaystyle \sum_{i=1}^n \lambda_i v_i$ for $\lambda_i \in \RR_{>0}, \displaystyle \sum_j \lambda_j =1, v_j \in S$ implies $v_j = d$ for all $j$.
\end{definition}
Let $E\subset \ZZ^n$ be a set of integral vectors and $P=\conv(E)$ the polyhedron defined by its convex hull. The following is a very useful criterion for deciding whether a given point in $E$ is extremal:

\begin{lemma}(Extremality Criterion)\label{extremality}
A vector $d\in P$ is an extremal point if and only if $d\in E$ and:
$$ \mbox{For all }N \in \NN \mbox{ and } d_1,...,d_N\in E: \Big( Nd = d_1+....+d_N \mbox{ implies } d_1 = ... = d_N = d \Big) $$
\end{lemma}
\begin{proof}
Clearly, an extremal point $d$ must lie in $E$, else it would need to be a nontrivial convex combination of vectors in $E$. If $(Nd = d_1+....+d_N)$ is an expression as above, dividing by $N$ gives a convex combination. We conclude $d_i=d$ for all $i$.

Conversely, assume $d\in E$ is a vector for which the above implication holds. An easy computation shows that $d$ is extremal if and only if it cannot be written as a nontrivial convex combination of vectors in $E$. Suppose $d=\sum_j \lambda_j d_j$ is a convex combination with $\lambda_j>0$ and $d_j\in E$ for all $j$.

If $d_j=d$ for all $i$, we are done. 

If not, we can bring all $\lambda_j d_j$ with $d_j=d$ to the left hand side, rescale, and hence obtain a representation of $d$ as a convex combination of vectors that are all different from $d$. Therefore, we may assume $d_j\neq d$ for all $j$. 
As the rational vector $d$ lies in the real convex span of the rational points $d_1,...,d_N\in \ZZ^n$, we know by Lemma $4$ in \cite{[2]} that $d$ also lies in their rational convex span. 
Thus we can find $\mu_j = \frac{p_j}{q_j}$ with $p_j \in \NN_0$, $q_j \in \NN $ such that $d=\sum \mu_j d_j$. Then, for $N = \displaystyle \prod_j q_j$, we have: $$Nd= \displaystyle \sum_i p_i \left(\displaystyle \prod_{j\neq i} q_j\right) d_i$$
This is a sum of $\displaystyle \sum_i p_i \left(\displaystyle \prod_{j\neq i} q_j\right) = N \displaystyle \sum_i \frac{p_i}{q_i} = N$ vectors in $E$, so by the implication in the theorem, we have for all $i$ with $p_i \neq 0$ that $d = d_i$. This is a contradiction.
\end{proof}

\subsubsection{Generalised Non-Classifiability Theorem} We need new definitions to state the general version of the theorem.
Recall Definition \ref{MDgraphs} of MD-graphs and of the function $\abst$. We can associate polyhedra to edge-weights on graphs:
\begin{definition} Let $G$ be an MD-graph with edge-weights $w$. Consider the set $E_w$ of nonzero (nonnegative) integral flows $f$ for which $\sum_e f(e) w(e)=0$.  Define \textit{polyhedron associated to $w$} by $Q_w=\conv(E_w)$.
\end{definition}

As an interpretation, we can think of $G$ as a country with cities and connecting streets, where transferring a food-unit over route $e$ costs $w(e)$ money-units. Then $E_w$ are the nonzero integral food-flows for which our selfless state does not earn or loose money with its road toll/subsidy system. 

We extend this definition to vertex-weights:
\begin{definition}
Let $v$ be a vertex-weight on $G$. Define an edge-weight $w_v$ by giving an edge from $a$ to $b$ the weight $v(a)$. Set $E_v=E_{w_v}$ and $Q_v=Q_{w_v}$.
\end{definition}

In our above motivation, this corresponds to the state adjusting its tolls to how desirable it is to leave certain cities. Notice that for $x\in (\ZZ\backslash\{0\})^n$ a vertex-weight on the complete digraph $G_n$, the set $E_v$ contains precisely the nonzero integral vectors in the set $V(x)$ defined in Definition \ref{disc}.

Recall that an MD-graph is \textit{reflexive} if its weakly connected components agree with its connected components. In this situation, we can prove:

\begin{theorem}(Generalised Non-Classifiability) \label{generalclassi} Let $G$ be a reflexive abstract MD-graph with $M$ vertices and $E>0$ edges. Let $x\in \ZZ^n$ be a vertex-weight on $G_n$ containing at least $M+3E^{3ME+1}$ entries equal to $1$ and at least $M+3E^{3ME+1}$ entries equal to $-1$. Then, there is an extremal point $f$ of the polyhedron $Q_x$ whose abstract graph is $G$.
\end{theorem}

We deduce the specific Non-Classifiability Theorem from this general case:
\begin{proof}[Proof of \ref{classi} from \ref{generalclassi}]
Let $G$ be a connected, so in particular reflexive, MD-graph with $M$ vertices and $E>0$ edges. Consider the word $w=\phi(x,x)$, where $$x=
\left( \underbrace{1, ... ,1,}_{N \mbox{ times}} \underbrace{-1, ... ,-1}_{N \mbox{ times}}\right)$$
for $N=M+3E^{3ME+1}$. By the general version of the theorem, the polyhedron $Q_x$ contains an extremal point $f$ for $Q_x$ whose underlying abstract graph is $G$.

\begin{claim}
With the notation introduced in Definition \ref{disc}, we have $$P(x)=\conv(D(x)+V(x))\subset Q_x$$
\end{claim}
\begin{proof}[Proof of claim] Given $d+v\in D(x)+V(x)$, we can use Lemma $4.11$ in \cite{[4]} to express $v$ as $v=\displaystyle \sum_j \lambda_j v_j$ as a nonnegative linear combination of integral representatives $v_i$ of the extremal rays of $V$. Then for some natural $N> \sum _i\lambda_i$, we have:
 $$d+v= \sum_j \frac{\lambda_j}{N} \left( d + N v_j \right)+ \left(1- \frac{\sum_j \lambda_j}{N}\right)\cdot (d+0)$$
This implies that $d+v \in Q_x$. The claim follows as $Q_x$ is convex.\end{proof}
Hence since $f$ is an extremal point in $Q_x$ and lies in $P(x)$, it follows immediately that $d$ is also an extremal point for $P(x)$ \end{proof} 

\begin{remark} Theorem \ref{classiclassi} is sharp in the sense that only connected graphs can occur as abstract graphs underlying extremal points of $P(x)$. Indeed, all extremal points are disc-vectors as if $v=d+e$ is extremal, with $d\in D(x)$ and $e\in V(x)$, then, by considering the expression $v=\frac{1}{2}d+\frac{1}{2}(d+2e)$, we can see that $e=0$.
\end{remark}

We need a central definition before we can start the proof of the generalised theorem. We have seen in \ref{MDgraphs} that to every MD-graph $G$, we can associate an equivalent abstract graph, denoted by $\abst(G)$. The next definition describes how this construction extends to flows and weights.
\begin{definition} \label{abst2} Given a flow $g$ on a graph $G$ with support $S$. The abstract graph $\abst(g):=\abst(S)$ is naturally equipped with an induced flow $f$. 

If moreover $G$ is equipped with an edge weight $u:E(G)\rightarrow \RR$, then we construct the induced weight $w$ on $\abst(f)$ as follows: We can pass from a (finite) graph to its abstraction in finitely many steps by successively joining pairs of edges. Whenever we merge two edges $e_1,e_2$, we give the new arising edge $e_{12}$ the weight $u(e_1)+u(e_2)$. This yields a well-defined weight $w$ on the graph $\abst(f)$.  

Write $\abst(f,w)$ for the graph $\abst(f)$ with induced flow and edge-weight.
\end{definition}

\subsection{Proof of Generalised Non-Classifiability Theorem}
\begin{proof}
This rather long proof involves multiple steps. Before we fill out the details, we will give a rough sketch of how we turn a graph into an extremal point underlied by this graph. The deep reason which allows us to get such a strong control over extremal points via the Extremality Criterion \ref{extremality} is that $N$ positive integers summing up to $N$ all have to be equal to $1$.
Let $G$ be a reflexive abstract MD-graph with $M$ vertices and $E>0$ edges, together with a vertex-weight $x\in \ZZ^n$ as in the theorem. We proceed in three steps: 

\textbf{Step (1):} We find an integral flow $f$ on the graph $G$, nonzero on all edges, such that there exists an edge $e$ (drawn with a dotted line) with flow-value $1$. Moreover, $f$ satisfies $f(e')\leq E^M$ on all edges $e'$.

\textbf{Step (2):} We define an integral edge-weight $w$ on $G$, which is negative on $e$ and positive on all other edges.
The number-theoretic properties of $w$ are chosen to allow an application of the Extremality Criterion \ref{extremality}. More precisely, we will show that the flow $f$ is an extremal point of the polyhedron $Q_w$. 

\textbf{Step (3):} We implement the edge-weighted flowed graph $(G,f,w)$: that means we find an integral flow $g\in V(x)$ on $G_n$ such that $\supp(g)=G$, the induced flow of $g$ is $f$, and the edge-weight on $G$ induced from the weight $w_x$ on $G_n$ agrees with the weight $w$ from Step (2). It then follows easily that $g$ is the required extremal point of $Q_x$.

The following picture describes this construction in a simple example (notice that not all edges are drawn in the $4^{th}$ part.)
\begin{figure}[htbp]
	\centering
	\labellist 
\pinlabel {\tiny D} at 21 253.7
\pinlabel {\tiny D} at 432 253.6
\pinlabel {\tiny D} at 836 252.5
\pinlabel {\tiny A} at 177 148
\pinlabel {\tiny A} at 588 147.9
\pinlabel {\tiny A} at 992 146.8
\pinlabel {\tiny B} at 135 241.55
\pinlabel {\tiny B} at 545.7 241.52
\pinlabel {\tiny B} at 950 240.3
\pinlabel {\tiny C} at 217 326.73
\pinlabel {\tiny C} at 628 326.6
\pinlabel {\tiny C} at 1032 325.0
\pinlabel {\tiny A} at 1369 357.7
\pinlabel {\tiny B} at 1368.1 312.9
\pinlabel {\tiny C} at 1366.5 269.6
\pinlabel {\tiny D} at 1367.5 226.5
\pinlabel {Step 1} at 338 315
\pinlabel {Step 2} at 749 315
\pinlabel {Step 3} at 1183 315
\pinlabel {$L_i$} at 1368 410
\pinlabel {$R_i$} at 1575 410
\pinlabel {\small 3} at 483 373
\pinlabel {\small 3} at 894 376
\pinlabel {\tiny $w_5$} at 911 335
\pinlabel {\small 1} at 472 143
\pinlabel {\small 1} at 883 135
\pinlabel {\tiny $w_2$} at 910 170
\pinlabel {\small 2} at 484 258
\pinlabel {\small 2} at 895 258
\pinlabel {\tiny $w_4$} at 895 208
\pinlabel {\small 1} at 600 270
\pinlabel {\tiny $w_1$} at 1037 237
\pinlabel {\tiny $w_3$} at 1030 202
\pinlabel {\small 1} at 1011 270
\pinlabel {\small 1} at 600 191
\pinlabel {\small 1} at 999 201
\pinlabel {\tiny $w_6$} at 960 192
\pinlabel {\small 2} at 665 185
\pinlabel {\small 2} at 1076 184
\pinlabel {\small 1} at 1458 362
\pinlabel {\small 1} at 1287 268
\endlabellist
		\includegraphics[width=0.90\textwidth]{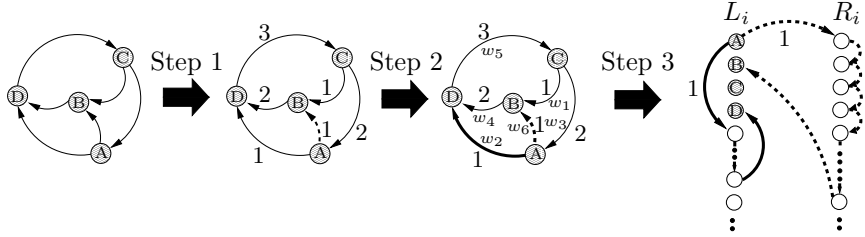}
	\caption{Finding extremal point with given abstract graph. Here $w_1=10^6\!+\!1,$\ \ \ $w_2=10^6\!+\!10, w_3=10^6\!+\!10^2, w_4=10^6\!+\!10^3, w_5= 10^6\!+\!10^4, w_6=-9,032,211$}
	\label{fig:FIGURE9}
\end{figure}\\
We now provide the details:
\subsubsection*{Step (1):}
To find the required flow, we need two lemmata. The first one characterises which graphs can appear as supports of flows.
\begin{proposition}\label{weaklyconn}
Let $G$ be an MD-graph with $M$ vertices and $E$ edges. Then $G$ admits a flow $f$ which is positive on all edges if and only if it is reflexive. Moreover, such a flow can be chosen to satisfy $f(e)\leq E^M$.
\end{proposition}
\begin{proof}
Let $f$ be such a flow and assume that $H$ is a weakly connected component of $G$. Write $f_{ij}$ for the sum of all flows through edges from $i$ to $j$. By finiteness, there is a connected component $C$ in $H$ without ingoing edges. But such a component would also have no outgoing edges by the following calculation:
$$0 = \displaystyle \sum_{i \in C} \sum_{j \in G} f_{ij} - f_{ji} = \displaystyle \sum_{i \in C} \sum_{j \in C} f_{ij} - f_{ji}  + \displaystyle \sum_{i \in C} \sum_{j \in G \backslash C} f_{ij} - f_{ji} 
= 0 + \displaystyle \sum_{i \in C} \sum_{j \in G \backslash C} f_{ij}$$
Hence, $C$ is equal to $H$. 
 
If, conversely, $G$ is a reflexive MD-graph on $n$ vertices, we can consider the set $S$ of all possible cycles on $G$ (no repeated vertices; we allow cycles which use only one edge to go from a vertex back to itself). This set certainly contains at most $E^M$ elements. We obtain a flow $f$ by adding all of these individual cycles in $S$. It is nonzero on all edges since we can complete every directed edge to a cycle by reflexivity. 
\end{proof}

The next graph-theoretic lemma is the key tool in Step (1), since it will allow us to find the distinguished edge $e$.
\begin{lemma}\label{takeaway} Let G be a connected abstract MD-graph with at least one edge. Then there is an edge $e$ such that $G\backslash \{e\}$ is still connected. \end{lemma}
\begin{proof} We prove the claim by induction.

If $|G|=1,2$, the statement holds trivially.

If $|G|>2$, we have $\indegree(v) + \outdegree(v)\geq 3$ for all vertices $v$ in $G$ since $G$ is abstract. We call this the \textit{degree-condition}.
Choose a cycle $C$ of length $k\geq 2$ in $G$. If there is a vertex in $C$ that is joined to any vertex in $C$ apart from its succeeding one, then we can remove an edge without disconnecting the graph. Thus we may also assume that the internal edges of $C$ are exactly the $k$ edges forming the cycle, and hence in particular that $C\neq G$.
Define $G'$ to be the graph obtained from $G$ by contracting $C$ to a single vertex $v$. Then $1 < |G'| < |G|$. At every vertex of $G'$ apart from $v$, the degree-condition holds automatically. 
There is certainly one in- and one outgoing vertex at $v$ by connectedness. 

If this is all, then $k=2$ and the in-/outgoing edges of the cycle must be attached to distinct vertices by the degree condition:
\begin{figure}[H]
	\centering
		\includegraphics[width=0.25\textwidth]{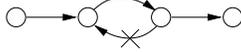}
	\label{fig:possibilities}
		\caption{Special case with circle $C$ of length $k=2$}
\end{figure}
In this case, remove the indicated edge going in the opposite direction without disconnecting the graph.

If, on the other hand, the degree-condition holds at $v$, our smaller contracted graph $G'$ is connected and abstract, and we can remove an edge $e$ without disconnecting $G'$ by induction. Now remove the corresponding edge from $G$. Since every path in $G'$ lifts to a path in $G$, we conclude that also $G\backslash \{e\}$ is connected. \end{proof}

We can now finish the first step of the proof: let $G$ be a reflexive abstract MD-graph with $M$ vertices and $E>0$ edges.
Pick a connected component $C$ of $G$ with at least one edge and remove some edge $e$ from $C$ without disconnecting it by Lemma \ref{takeaway}. Then $G\backslash\{e\}$ is still reflexive, so by Lemma \ref{weaklyconn}, we can find a flow $f'$ on this graph with $0<f'(e')\leq (E-1)^M$ for all edges $e'$. Pick a cycle through $e$ and add the corresponding flow to $f'$ to obtain the flow $f$ required for Step (1).

\subsubsection*{Step (2)}
The next number-theoretic lemma gives a uniqueness result for the scalar product of integral vectors and will facilitate the definition of the edge-weight $w$:
\begin{lemma}\label{numbers}
Given a vector $f=(f_1,...,f_k)\in \NN_0^k$ of nonnegative integers, we can find $(w_1,...,w_k)\in \NN_0^k$ such that:
$$\forall \lambda \in \NN_0^k : \left( \displaystyle \sum_{j=1}^k \lambda_j w_j = \displaystyle \sum_{i=j}^k f_j w_j \right) \Rightarrow \left(\lambda = f \right)$$
and $w_i < 2 (\sum_j f_j+1)^{k+1}$ for all $i$.
\end{lemma}
\begin{proof} Set $M=\displaystyle \sum_{j=1}^k f_j$. Let $n= (M+1)^{k+1}$  and define $w_j$ by $w_j = n+(M+1)^{j-1}$ for $j=1,...,k$.
The result follows by distinguishing the cases $\sum_{j} \lambda_j$ smaller than, larger than, or equal to $M$, and using uniqueness of the $(M+1)-$adic representation in the third case.
\end{proof}

Recall the flow $f$ on $G$ constructed in Step (1). Label all edges other than $e$ by $e_1,...,e_{E-1}$.
We now apply Lemma \ref{numbers} to the vector $(f(e_1),...,f(e_{E-1}))$ to obtain an edge-weight $w$ defined on all edges except for $e$. Give $e=e_E$ the weight $w(e_E)=-\displaystyle \sum_{j=1}^{E-1} f(e_j) w(e_j)$. We have a bound $|w(e_j)|<2 E^{(M+1)(E+1)}$ for all $j$. 

\begin{claim} The flow $f$ is an extremal point in $Q_w=\conv(E_w)$.\end{claim}
\begin{proof} The crucial fact underlying this trick is that if $N$ positive integers sum up to $N$, they must all be equal to $1$. 

Assume that $f\in E_w$. Let $Nf=f_1+...+f_N$ be a decomposition for the flow $Nf$ with $f_1,...,f_N\in E_w$ and $N\in \NN$. Every $f_i$ is a nonzero integral flow in $E_w$ and therefore must have positive flow through $e$ to balance out the negative weight coming from flow through other edges.  The numbers $f_1(e),...,f_N(e)$ are all positive integers and sum up to $N=Nf(e)$. Hence $f_i(e)=1$ for all $i$.

Since $f_i \in E_w$, this implies that the following difference vanishes: $$\displaystyle \sum_{j=1}^{E-1}f_i(e_j)w(e_j) - \sum_{j=1}^{E-1}f(e_j)w(e_j)=0$$
for all $i$, and we conclude $f_i=f$ for all $i$ by the choice of $w$ in \ref{numbers}.
\end{proof}

\subsubsection*{Step (3)}
We will describe hereafter how we can find a flow $g\in V(x)$ such that the graph $\abst(g)$, equipped with the flow induced by $g$ and the edge-weight inherited from $w_x$, is equal to the flowed weighted abstract graph $(G,f,w)$ constructed above. In a second step, we will deduce from Step (2) that $g$ is extremal.

\subsubsection*{Concretising Abstract Graphs}\label{sec:concretise} 
Recall our given vertex-weight $x\in \ZZ^n$. Label the vertices in $G_n$ with $x-$weight $+1$ by $L_1,...,L_p$ (the ``left vertices") and the ones with weight $-1$ by $R_1,...,R_q$ (the ``right vertices").
Assume that $G$ has vertices $V_1,...,V_M$ and edges $e_1,..,e_{E-1},e_E=e$.

Construct a flow $g\in V(x)$ in a step-by-step process as follows: 
The vertex $V_i$ in $G_{n}$ will correspond to $L_i$ in $G$ for $i=1,2,...,M$, so all vertex-representing nodes of $G_{n}$ lie on the left. Having implemented the edges $e_1,..,e_{i-1}$ with the flow-vector $	h_{i-1}$, assume $E_i$ goes from $V_p$ to $V_q$ with $w_ {e_i}=s$. Pick $|s|$ vertices $l_1,...,l_{|s|}$ on the left
and $|s|$ vertices $r_1,...,r_{|s|}$ vertices on the right of $G_{n}$ which do not lie in $\supp(h_{i-1})$. We always have enough vertices available since $p,q\geq M+3E^{3ME+1}\geq M+\sum_{j=1}^E (|w(e_j)|+1)$.
\\
Define a simple path $P$ in $G_{n}$ as follows: 

If $s>0$, consider $P=pr_1l_s...l_1q$. 

If $s=0$, we take $P=pr_1q$.

If $s<0$, the path we use is $P=pr_1r_2...r_{|s|+1}q$.
\\
Obtain the weight $h_i$ from $h_{i-1}$ by adding flow $f(e_i)$ to the edges of the path $P$. One checks easily that the resulting vector $g=h_E$ satisfies $$\abst(g,w_x)=(G,f,w)$$

We are now finally in a position to finish off this proof: Let $g=\sum_j\lambda_j g_j$ be a convex representation of $g$ with $g_j  \in Q_x$, $\lambda_j >0, \sum_j \lambda_j=1$. We can abstract to find flow-vectors $f_j$ on $G$ which induce $g_j$ as in Definition \ref{abst2}. Since the weight $w$ is induced by $w_x$, all abstractions $f_j$ must lie in $Q_w$. We thus obtain a convex representation of the flow $f$ in $Q_w$. By Step (2), we know that $f$ is extremal and hence all flows $f_j$ must be equal to $f$. 

From this, we immediately conclude that $g=g_j$ for all $j$, so $g$ is extremal in $Q_x$. This completes the proof of the Generalized Non-Classifiability Theorem \ref{generalclassi}. \end{proof}

\subsection{Essential Decomposition}
The principal aim of our efforts is to find a concise representation of the $\scl$-polyhedra of elements $z\in M_n$. These are given as:  $$P=P(z)=\conv(D(z)+V(z))=\conv(D(z))+V(z)$$

Here, $V=V(z)$ is the understood recession cone of the polyhedron, and what we need to describe is the contribution of the disc-vectors $D=D(z)$. The set of extremal points of $P$ is the minimal set $S\subset D+V$ with $\conv(S+V)=P$. As a natural alternative, we can consider the minimal set $T\subset D+V$ with $T+V=D+V$, and we obtain the \textit{essential decomposition}. An easy exercise shows that $T$ consists of the following vectors:
\begin{definition} A vector $d\in D$ is an \textit{essential disc-vector} if it cannot be written as a nontrivial sum in $D+V$,
i.e.~if $d=e+v$, $e\in D, v\in V$ implies $v=0$.\end{definition}

It is immediate from the minimality of $S$ that $S$ is contained in $T$, i.e.~that every extremal point is an essential disc-vector. The following example shows that not every essential disc-vector needs to be extremal:
\begin{figure}[htbp]
		\labellist
	\centering
	\pinlabel {\small $1$} at 24 135
\pinlabel {\small $-\!1$} at 356 134
\pinlabel {\small $2$} at 191 248
\pinlabel {\small $4$} at 191 188
\pinlabel {\small $-\!3$} at 188 82
\pinlabel {\small $-\!3$} at 188 22
\pinlabel {$1$} at 116 259
\pinlabel {$1$} at 116 206
\pinlabel {$1$} at 116 114
\pinlabel {$1$} at 116 54

\pinlabel {$1$} at 270 262
\pinlabel {$1$} at 270 206
\pinlabel {$1$} at 270 114
\pinlabel {$1$} at 270 59

\endlabellist		
\includegraphics[width=0.250\textwidth]{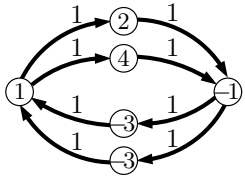}

	\caption{An essential, but not extremal disc-vector (average of two distinct disc-vectors)}
	\label{fig:FIGURE11}
\end{figure}

We have seen in the previous section that the set $S$ of extremal points cannot be classified by the topology of the flows. Since $S\subset T$, this result extends to $T$. There is a further complication of computational nature arising for essential disc-vectors: the essential decomposition of the $\scl$-polyhedra is not suitable for computational purposes. More precisely:

\begin{Essential Membership Theorem} The following decision problem  is \coNP- complete (i.e.~ has \NP-complete complement): ``Given a word $w=\phi(x,y)\in F_2$ of reduced length $m=2n$ and an integral vector $d\in \ZZ^{n^2}$. Is $d$ an essential disc-vector for $D(x)+V(x)$?"
\end{Essential Membership Theorem}

\begin{proof}
The input of the problem can be represented as an element $s=(x,d)$ of $\displaystyle \bigcup_n \left(\ZZ \backslash \{0\}^{n-1}\right) \times \NN_0^{n^2}$, since $y$ is irrelevant and $x_n$ is determined by $x_1,...,x_{n-1}$.

First notice that checking whether a given flow $d\in \NN_0^{n^2}$ has connected support can be done in polynomial time (e.g.~by depth-first search).
To see that the problem is in $\coNP$, assume that the answer to the problem determined by $(x,d)$ is negative, where $x$ has length $n$. 
We can check in polynomial time if $d\in D(x)$, so we may assume that this is the case. 
Given a counterexample $e,v\in \NN_0^{n^2}$, we can check in polynomial time that $e\in D(x)$, $v\in V(x) \backslash \{0\}$ and $d=e+v$. Therefore our problem is in $\coNP$.

To show that it is $\coNP - $complete, we will give a polynomial time reduction from the following \coNP-complete problem:

\begin{problem} \COSUBSETT{}
Given $R\subset \ZZ$ finite, is it true that: $$\forall T\in \mathcal{P}(R)\backslash\{\emptyset\}: \displaystyle \sum_{t\in T} t \neq 0?$$ \end{problem} 

Suppose we have a list of numbers $a_1,...,a_m$ and want to decide the statement: ``No nonempty subset sums up to 0." \\
Set $a_{m+1}=-\displaystyle \sum_{i=1}^m a_i$ and notice that there is a nonempty subset of $\{a_1,...,a_m\}$ summing up to 0 if and only if there is a \textbf{proper} nonempty subset of $\{a_1,...,a_{m+1}\}$ with vanishing sum.
Without loss of generality, it is enough to only consider nontrivial instances of \COSUBSET{}, so to assume $a_k\neq 0$ for all $k$. Let $n=\left( 2(m+1)\right)+2$ and define the vertex weight $x=\left(m,1,a_1,-1,a_2,-1,...,a_{m+1},-1\right)\in M_{n}$.

Consider the vector $d\in D(x)$ determined by the flow drawn below:
\begin{figure}[htbp]
	\centering
		\labellist
\pinlabel {\small $-\!1$} at 404 216.5
\pinlabel {\small $-\!1$} at 489.5 149.7
\pinlabel {\small $-\!1$} at 646 44
\pinlabel {\small $a_1$} at 404 419
\pinlabel {\small $a_2$} at 489 493
\pinlabel {\small $a_{m\!+\!1}$} at 680 604
\pinlabel {\small $1$} at 185 325
\pinlabel {\small $m$} at 44 325

\endlabellist		\includegraphics[width=0.310\textwidth]{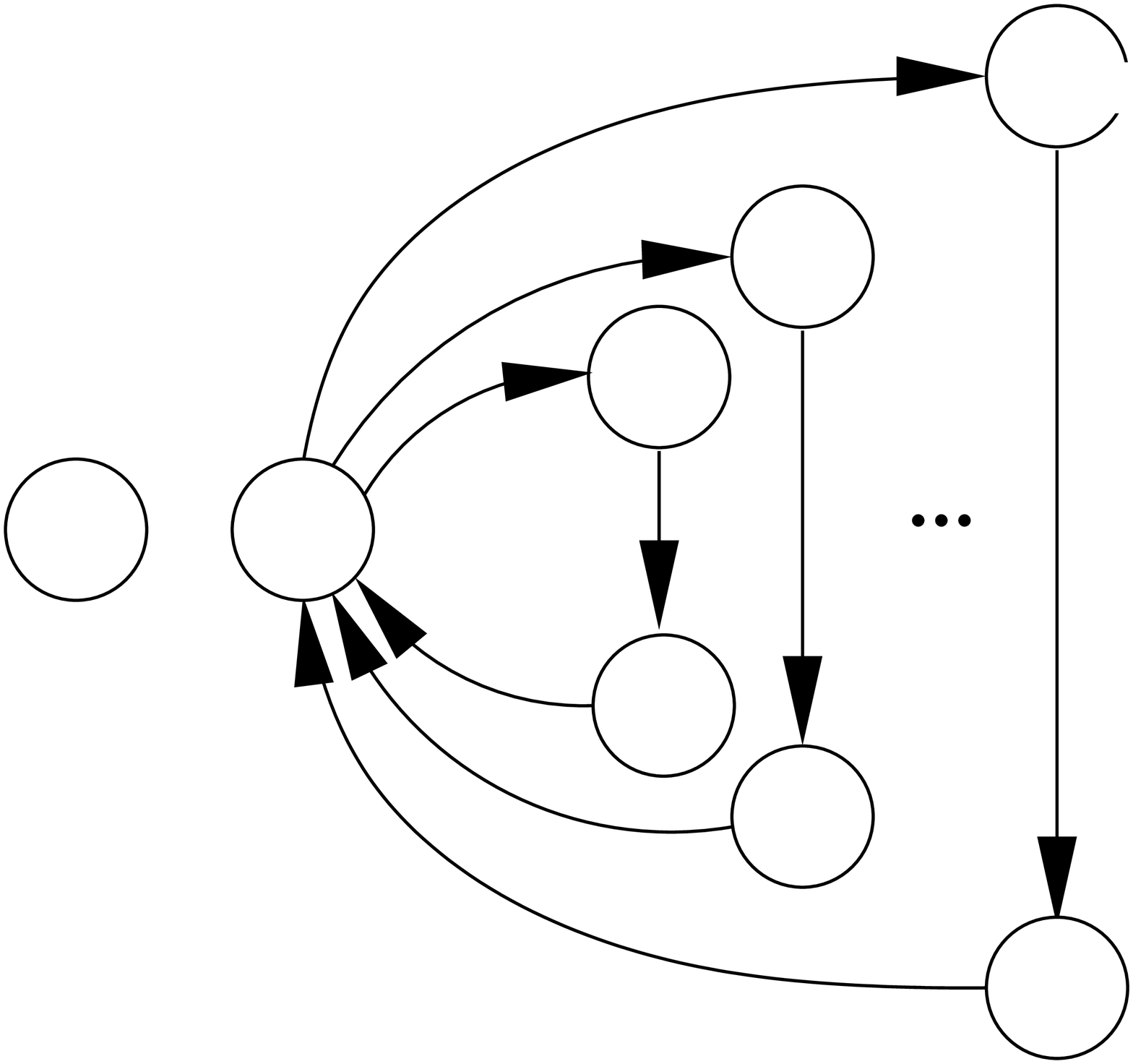}
	\caption{Reduction from \COSUBSET{} to \ESSENTIAL}
	\label{fig:FIGURE10}
\end{figure}

Now $d$ is an essential disc-vector if and only if there is no proper nonzero connected integral subflow $e$ with $h_x(e)=0$, but these flows correspond bijectively to proper nonempty subsets $T\subset\{a_1,...,a_{m+1}\}$ with 
$\displaystyle \sum_{t\in T} a_t = 0$. 
Hence, deciding if $d$ is essential is equivalent to deciding \COSUBSET.
The reduction is computable in polynomial time. \end{proof}
Notice that the above polynomial reduction fails if we restrict ourselves to alternating words since there, the length $m=2n$ of our word grows proportionally to the $|a_i|$ and hence exponentially in the input size.
\\

We conclude the paper with an unrelated, but pretty conjecture we spotted:
\begin{conjecture}
Let $p,q,r\in \NN$ and $n=p+q+r$. Then:
$$\scl(a^{-n} b^{-1} a^p b a^q b^{-1} a^r b)=1-\frac{\gcd(n,q)}{2n}$$
\end{conjecture}

\section{Acknowledgement}
I am very grateful to Prof. Danny Calegari for his advice, suggestions, and stimulating encouragement and to Matthias Goerner, Freddie Manners, Alden Walker and the referee for their helpful comments. I also thank the California Institute of Technology for supporting me with a Summer Undergraduate Research Fellowship. 
\newline

\clearpage{}

\appendix{} 
\fancyhead[EC]{\uppercase{\footnotesize{Freddie Manners}}}
\fancyhead[OC]{\uppercase{\footnotesize{Appendix}}}
%
%
%

  \theoremstyle{plain}
  \newtheorem{thm}{Theorem}
  \newtheorem{lem}[thm]{Lemma}
  \newtheorem{cor}[thm]{Corollary}

  \theoremstyle{definition}
  \newtheorem*{dfn}{Definition}

%

\maketitle

\section{(by Freddie Manners) Proof of the NP-completeness of  \MIXEDSUBSETP{}}
\subsection{Definitions}
We recall the definitions of the problems \SUBSET{}, \SUBSETP{}, \VARSUBSETP{}, and \MIXEDSUBSETP{}.  We further define what it means to consider these problems \emph{over $\ZZ^k$}; for example:
\begin{problem}
\SUBSET{} \emph{over $\ZZ^k$}: Given $v_1, \ldots, v_n$ with $v_i \in \ZZ^k$, does there exist a nonzero vector $(\lambda_1, \ldots, \lambda_n) \in \{0,1\}^n$ with $\sum_{j}\lambda_j v_j = 0$?
\end{problem}
The other problems over $\ZZ^k$ are defined analogously.  When we wish to consider the original problem, we may refer to it as e.g.~\SUBSET{} \emph{over $\ZZ$} to avoid ambiguity.

It is a classical fact that \SUBSET{} over $\ZZ$ is NP-complete. We wish to investigate the complexity of \MIXEDSUBSETP{} over $\ZZ$.

\subsection{Proofs}

We first note that \SUBSET{} and \SUBSETP{} are trivially equivalent.

\begin{lem}
  \label{lem:sspnp}
  Given $v_1, \ldots, v_n \in \ZZ$ summing to zero, \SUBSETTP{} holds \emph{iff} \SUBSETT{} holds on $v_1, \ldots, v_{n-1}$.
\end{lem}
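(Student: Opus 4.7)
The plan is to unpack the definitions and exploit the hypothesis $\sum_i v_i = 0$, which lets us freely replace any witness subset by its complement. The key observation is that the ``P'' variant excludes the trivial all-ones witness, and under the total-sum-zero hypothesis the all-ones solution is exactly the obstruction that distinguishes the two problems on $v_1, \ldots, v_n$.

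For the easy direction ($\Leftarrow$), suppose \SUBSETT{} holds on $v_1, \ldots, v_{n-1}$, witnessed by some nonzero $(\lambda_1, \ldots, \lambda_{n-1}) \in \{0,1\}^{n-1}$ with $\sum_{j<n} \lambda_j v_j = 0$. Then setting $\lambda_n = 0$ yields a witness $(\lambda_1, \ldots, \lambda_n)$ for \SUBSETT{} on the full list that is neither the zero vector nor the all-ones vector, hence witnesses \SUBSETTP{}.

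For the other direction ($\Rightarrow$), take a \SUBSETTP{}-witness $(\lambda_1, \ldots, \lambda_n)$: by definition it is neither $0$ nor $(1,\ldots,1)$, and $\sum_j \lambda_j v_j = 0$. If $\lambda_n = 0$, then $(\lambda_1, \ldots, \lambda_{n-1})$ is nonzero and immediately witnesses \SUBSETT{} on $v_1, \ldots, v_{n-1}$. If instead $\lambda_n = 1$, pass to the complementary vector $\mu_j = 1 - \lambda_j$; using $\sum_j v_j = 0$ one has $\sum_j \mu_j v_j = -\sum_j \lambda_j v_j = 0$, while $\mu$ is again neither $0$ nor $(1,\ldots,1)$ and now satisfies $\mu_n = 0$, reducing to the previous case.

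There is no real obstacle here: the only subtle point is keeping the ``proper nonempty'' condition of \SUBSETTP{} straight, so that complementation preserves being a valid witness. Once that bookkeeping is fixed, each direction is a one-line construction and the lemma follows.
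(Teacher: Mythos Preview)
Your proof is correct and uses exactly the same idea as the paper: the complement $\lambda_i' = 1 - \lambda_i$ of a \SUBSETP{} witness is again a \SUBSETP{} witness (since $\sum_i v_i = 0$), so one may assume $\lambda_n = 0$ and read off a \SUBSET{} witness on $v_1,\ldots,v_{n-1}$. The paper records only this complementation remark and leaves the routine case split implicit; your write-up simply spells it out.
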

\begin{proof}
  Remark that the complement of a solution to \SUBSETP{} (i.e.~setting $\lambda_{i}' = 1 - \lambda_i$) is also a solution.
\end{proof}

We now prove a crucial lemma that allows us to consider solving simultaneous subset sum problems; equivalently, it allows us to work over $\ZZ^k$ rather than $\ZZ$.

\begin{lem}
  \label{lem:parallel}
Each of the above subset sum problems (i.e. \SUBSETP{}, \VARSUBSETP{} or \MIXEDSUBSETP{}) over $\ZZ^k$ has a polynomial reduction to the same problem over $\ZZ$.
\end{lem}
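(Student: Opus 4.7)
The plan is to use the standard base-$B$ encoding that simulates several integer constraints by a single one. Given an instance of the problem over $\ZZ^k$ with vectors $v_1,\ldots,v_n$, set $M = \max_{i,j} |v_i^{(j)}|$, choose an integer $B > 2nM$, and define $\phi : \ZZ^k \to \ZZ$ by $\phi(v) = \sum_{j=1}^{k} v^{(j)} B^{j-1}$. The reduction maps the instance $(v_1,\ldots,v_n)$ to the scalar instance $(\phi(v_1),\ldots,\phi(v_n))$.

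The forward direction is immediate from $\ZZ$-linearity of $\phi$: any $(\lambda_i)$ witnessing $\sum_i \lambda_i v_i = 0$ in $\ZZ^k$ automatically satisfies $\sum_i \lambda_i \phi(v_i) = 0$. For the reverse direction, suppose coefficients $\lambda_i$ (lying in $\{-1,0,1\}$, which covers all three problems) give $\sum_i \lambda_i \phi(v_i) = 0$. Writing $s_j = \sum_i \lambda_i v_i^{(j)}$, this says $\sum_{j=1}^{k} s_j B^{j-1} = 0$, while $|s_j| \leq nM < B/2$. A standard balanced-digit argument now forces every $s_j$ to vanish: reducing mod $B$ yields $B \mid s_1$, hence $s_1 = 0$ since $|s_1| < B$, and induction on $j$ finishes the claim. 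Thus $\sum_i \lambda_i v_i = 0$ in $\ZZ^k$, and the same tuple $(\lambda_i)$ is a witness for the vector instance.

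Polynomiality is routine: each $\phi(v_i)$ has bit-length $O(k \log(nM))$, polynomial in the input size $\Theta(nk \log M)$. Moreover, the reduction never alters the constraints placed on the coefficients $\lambda_i$, so ``valid solution'' is preserved in both directions across \SUBSETP{}, \VARSUBSETP{} and \MIXEDSUBSETP{}. The one place requiring genuine care is the possibility of negative entries $v_i^{(j)}$ and negative coefficients $\lambda_i$, which is what forces the balanced-digit argument with $B > 2nM$ rather than an ordinary positional one with $B > nM$. Beyond this, I anticipate no substantive obstacle.
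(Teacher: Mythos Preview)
Your proof is correct and follows essentially the same approach as the paper: the paper's ``pick $1 = N_1 \ll \cdots \ll N_k$ and set $y_i = \sum_j N_j v_i^{(j)}$'' is exactly your base-$B$ encoding with $N_j = B^{j-1}$, and your balanced-digit argument makes precise what the paper leaves as ``if the $N_j$ increase sufficiently fast, the problems are equivalent.'' Your version is simply a more explicit and carefully quantified rendering of the same idea (indeed $B > nM$ would already suffice, so your $B > 2nM$ is harmlessly conservative).
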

\begin{proof}
  Pick integers $1 = N_1 \ll \ldots \ll N_k$, and set $y_i = \sum_{j = 1}^k N_j v_i^{(j)}$.  Then if the $N_j$ increase sufficiently fast, the problems for $(y_i)$ and $(v_i)$ are equivalent.  (Note the $N_j$ need not increase so rapidly that their lengths in bits are super-polynomial in the other inputs.)
\end{proof}

We can now present our main construction, which aims to reduce \SUBSETP{} (over $\ZZ$) to \MIXEDSUBSETP{} (over $\ZZ^k$ for some $k$). Let $a_1, \ldots, a_n \in \ZZ$ be an input to \SUBSETP{}.  Consider the table:

\begin{table}[H]
  \begin{center}
    \begin{tabular}{|c||*{10}{c|}}
      \hline
      & $\alpha_1$ & $\alpha_2$ & $\ldots$ & $\alpha_n$ & $\beta_1$ & $\beta_2$ & $\ldots$ & $\beta_n$ & $P$ & $Q$ \\
      \hhline{|=#*{10}{=|}}
      $1$ & $a_1$ & $a_2$ & $\ldots$ & $a_n$ & $0$ & $0$ & $\ldots$ & $0$ & $0$ & $0$ \\
      $2$ & $-1$ & $-1$ & $\ldots$ & $-1$ & $-1$ & $-1$ & $\ldots$ & $-1$ & $n$ & $n$ \\
      \hline
      $3$ & $-1$ & $0$ & $\ldots$ & $0$ & $-1$ & $0$ & $\ldots$ & $0$ & $1$ & $1$ \\
      $4$ & $0$ & $-1$ & $\ldots$ & $0$ & $0$ & $-1$ & $\ldots$ & $0$ & $1$ & $1$ \\
      $\vdots$ & $\vdots$ & $\vdots$ & $\vdots$ & $\vdots$ & $\vdots$ & $\vdots$ & $\vdots$ & $\vdots$ & $\vdots$ & $\vdots$ \\
      $n + 2$ & $0$ & $0$ & $\ldots$ & $-1$ & $0$ & $0$ & $\ldots$ & $-1$ & $1$ & $1$ \\
      \hline
      $n + 3$ & $-1$ & $-1$ & $\ldots$ & $-1$ & $0$ & $0$ & $\ldots$ & $0$ & $r$ & $n-r$ \\
      \hline
    \end{tabular}
  \end{center}
\end{table}

Here, the columns beyond the first---labelled by $\alpha_i$, $\beta_i$, $P$, $Q$---represent elements of $\ZZ^k$ for $k = n + 3$, and the rows are simultaneous subset sum problems to be satisfied.  Note that every row sums to zero; that is, $(\alpha_i,\ \beta_i,\ P,\ Q)$ are a valid input to \VARSUBSETP{} or \SUBSETP{}.  Finally, $r$ is some integer in the range $0 < r < n$.

We now suppose that a solution $\lambda = (\lambda_{\alpha_i},\ \lambda_{\beta_i},\ \lambda_P,\ \lambda_Q)$ to \VARSUBSETP{} exists.  We show:
\begin{lem}\mbox{}
  \label{lem:tabletoas}
  \begin{enumerate}[(i)]
    \item $\lambda_P + \lambda_Q = 1$;
    \item $\lambda_{\alpha_i} + \lambda_{\beta_i} = 1$ for all $1 \leq i \leq n$;
    \item $\sum_i \lambda_{\alpha_i} = r \text{ or } n - r$;
    \item $\lambda$ constitutes a solution to \SUBSETP{} on this table;
    \item The $(\lambda_{\alpha_i})$ constitute a solution to \SUBSETP{} on $a_1, \ldots, a_n$.
  \end{enumerate}
\end{lem}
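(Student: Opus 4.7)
The plan is to extract one linear identity from each row of the table, pin down the ``pair sums'' $\lambda_{\alpha_i} + \lambda_{\beta_i}$ and the value $\lambda_P + \lambda_Q$, and then read off (i)--(v) in turn.

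First I would write $s := \lambda_P + \lambda_Q$. Rows $3, \ldots, n+2$ each give $\lambda_{\alpha_i} + \lambda_{\beta_i} = s$ for a single index $i$, and row~$2$ is just the sum of these and contributes no new information. The crux of the argument is to show $s = 1$. Since every $\lambda$-coordinate lies in $\{-1,0,1\}$ under \VARSUBSETP, we have $s \in \{-2,\ldots,2\}$. The extreme cases $s = \pm 2$ force $\lambda$ to be the constant vector $\pm\mathbf{1}$, i.e.\ the trivial ``full subset'' solution that is ruled out by the nontriviality clause implicit in \VARSUBSETP (the same convention exploited in Lemma~\ref{lem:sspnp}). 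For $s = 0$, the options $(\lambda_P,\lambda_Q) \in \{(0,0),(1,-1),(-1,1)\}$ combine with row~$n+3$ to give $\sum_i \lambda_{\alpha_i} \in \{0,\ 2r-n,\ n-2r\}$; I expect to eliminate each branch using the open range $0 < r < n$ together with the nonzeroness of $\lambda$.

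Once $s = 1$ is established the rest follows quickly. The identity $\lambda_P + \lambda_Q = 1$ with entries in $\{-1,0,1\}$ forces $(\lambda_P,\lambda_Q) \in \{(0,1),(1,0)\}$, giving (i); the same argument applied pairwise gives $(\lambda_{\alpha_i}, \lambda_{\beta_i}) \in \{(0,1),(1,0)\}$, which is (ii). A by-product is that every coordinate of $\lambda$ lies in $\{0,1\}$, so $\lambda$ is automatically a \SUBSETP solution on the table, yielding (iv). Row~$n+3$ then reads $\sum_i \lambda_{\alpha_i} = r\lambda_P + (n-r)\lambda_Q \in \{r,\ n-r\}$, giving (iii). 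Finally, row~$1$ contributes $\sum_i \lambda_{\alpha_i} a_i = 0$, and $\lambda_\alpha \in \{0,1\}^n$ is nonzero because by (iii) its weight lies in the open interval $(0,n)$; this is (v).

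The main obstacle I anticipate is the $s = 0$ subcase, since it is the only branch where row~$n+3$ can be satisfied consistently without collapsing by sign. I would resolve it by combining whatever ``proper subset'' or complementation convention is baked into \VARSUBSETP with the fact that $0 < r < n$ (so $2r - n$ is a nonzero integer strictly between $-n$ and $n$), together with a brief case analysis on row~$1$. Every other step is a bookkeeping consequence of the structural constraints the table was designed to enforce.
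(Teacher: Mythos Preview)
Your derivations of (ii)--(v) track the paper almost verbatim once (i) is in place, but your argument for (i) rests on a misreading of \VARSUBSETP{}. In the paper the coefficients are \emph{nonnegative} integers constrained by $0 < \sum_j \lambda_j < N$ (here $N = 2n+2$), not elements of $\{-1,0,1\}$. With that convention, row~$2$ by itself gives
\[
\sum_j \lambda_j \;=\; (n+1)(\lambda_P + \lambda_Q),
\]
and the two bounds force $\lambda_P + \lambda_Q = 1$ immediately: the value $0$ makes $\sum_j\lambda_j = 0$, and any value $\ge 2$ makes $\sum_j\lambda_j \ge 2n+2$. No negative values of $s$ ever arise, and there is no delicate $s=0$ subcase to analyse.

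Under your $\{-1,0,1\}$ reading, the $s=0$ branch you single out as ``the main obstacle'' is not merely awkward but genuinely unclosable: the lemma is simply false in that setting. For example, if $a_1 = a_2$, take $\lambda_{\alpha_1}=\lambda_{\beta_2}=1$, $\lambda_{\beta_1}=\lambda_{\alpha_2}=-1$, and all other coordinates (including $\lambda_P,\lambda_Q$) equal to $0$; every row of the table is satisfied, $\lambda$ is nonzero and not $\pm\mathbf 1$, yet $\lambda_P+\lambda_Q=0$. So no amount of ``brief case analysis on row~$1$'' will rescue (i) there. You also skip the $s=-1$ case entirely. Once the nonnegative convention is adopted, the remainder of your outline---using rows $i+2$ for (ii), row $n+3$ for (iii), the observation that all coordinates land in $\{0,1\}$ for (iv), and row~$1$ together with $0<r<n$ for (v)---is exactly the paper's proof.
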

\begin{proof}\mbox{}
  \begin{enumerate}[(i)]
    \item This follows from considering row $2$: if $\lambda_P + \lambda_Q = 0$ then $\sum_j \lambda_j = 0$, and if $\lambda_P + \lambda_Q \geq 2$ then $\sum_j \lambda_j \geq 2n + 2$, which are both forbidden.
    \item This follows from (i) and considering row $i + 2$.
    \item This follows from (i) and considering row $n + 3$.
    \item This follows from (i) and (ii).
    \item That $\sum_i \lambda_{\alpha_i} a_i = 0$ follows from considering row $1$.  Then, recalling that $0 < r < n$, (ii) and (iii) give the other constraints.
  \end{enumerate}
\end{proof}

As a direct consequence of part (iv) of this lemma, we see that the table is a valid input to \MIXEDSUBSETP{}.  We now show a converse to part (v):

\begin{lem}
  \label{lem:astotable}
  If $\mu_1, \ldots, \mu_k$ is a solution to \SUBSETP{} on $a_1, \ldots, a_n$, then for some choice of $r$, we can construct a solution to \SUBSETP{} on the table.
\end{lem}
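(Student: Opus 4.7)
The plan is to read off the shape of the desired $\lambda$ from Lemma~\ref{lem:tabletoas} and then set the parameter $r$ accordingly. Write $s = \sum_i \mu_i$. Since $\mu$ is a \SUBSETP{} solution on $a_1, \ldots, a_n$, it is neither zero nor the all-ones vector, so $1 \le s \le n-1$. I would then choose $r := s$ (which is a legal value, since $0 < r < n$) and define $\lambda_{\alpha_i} := \mu_i$, $\lambda_{\beta_i} := 1 - \mu_i$, $\lambda_P := 1$, $\lambda_Q := 0$.

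Next I would verify row-by-row that this $\lambda$ makes every row of the table sum to zero. Row $1$ reduces to $\sum_i \mu_i a_i = 0$, which is the hypothesis on $\mu$. Row $2$ gives $-s - (n-s) + n \cdot 1 + n \cdot 0 = 0$. Each row $i+2$ for $1 \le i \le n$ gives $-\mu_i - (1-\mu_i) + 1 + 0 = 0$. Row $n+3$ gives $-s + r \cdot 1 + (n-r) \cdot 0 = 0$ by the choice of $r$. Each check is a one-line calculation.

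Finally I would confirm that $\lambda$ is a legitimate \SUBSETP{} solution rather than merely a zero-sum vector: it has exactly $s + (n-s) + 1 = n+1$ ones out of $2n+2$ entries, so it is neither $0$ nor the all-ones vector. I do not foresee a real obstacle here; the construction is essentially forced by Lemma~\ref{lem:tabletoas}, and the only delicate point is ensuring that a legal $r \in (0,n)$ can be chosen. That is precisely what the \SUBSETP{} hypothesis on $\mu$ (which excludes both the empty and the full subset) guarantees.
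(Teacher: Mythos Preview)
Your proof is correct and follows exactly the paper's approach: set $r = \sum_i \mu_i$, $\lambda_{\alpha_i} = \mu_i$, $\lambda_{\beta_i} = 1 - \mu_i$, $\lambda_P = 1$, $\lambda_Q = 0$. The paper simply declares this choice and calls it straightforward, whereas you additionally spell out the row-by-row verification and the check that $\lambda$ is nontrivial; these extra details are fine but not a different method.
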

\begin{proof}
  This is straightforward: take $r = \sum_i \mu_i$, $\lambda_{\alpha_i} = \mu_i$, $\lambda_{\beta_i} = 1 - \mu_i$, $\lambda_P = 1$, and $\lambda_Q = 0$.
\end{proof}

We can now state and prove our main result:

\begin{thm}
  \label{thm:main}
  \SUBSETP{} over $\ZZ$ has a polynomial reduction to \MIXEDSUBSETP{} over $\ZZ^k$ (for $k = n + 3$).
\end{thm}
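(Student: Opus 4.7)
The plan is to reduce an instance $a_1, \ldots, a_n$ of \SUBSETP{} over $\ZZ$ to a polynomially bounded family of \MIXEDSUBSETP{} instances over $\ZZ^{n+3}$ by means of the table constructed above. Because the parameter $r$ must be fixed when the table is built, I would enumerate $r$ through the range $\{1, 2, \ldots, n-1\}$, construct a separate table $T_r$ for each value, and declare the original instance to be a YES instance of \SUBSETP{} precisely when at least one of the resulting $T_r$ is a YES instance of \MIXEDSUBSETP{}. The construction is clearly polynomial: each $T_r$ has $n+3$ rows and $2n+2$ columns of modest-sized integers, and only $n-1$ tables are produced.

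Correctness would be verified in two directions, both leaning on the preceding lemmas. In one direction, any \SUBSETP{} solution $(\mu_i)$ on the $a_i$ is proper and nonempty, so $r := \sum_i \mu_i$ falls in $\{1, \ldots, n-1\}$; Lemma \ref{lem:astotable} then produces a \SUBSETP{} solution on $T_r$, and the observation following Lemma \ref{lem:tabletoas}---that $T_r$ is a valid \MIXEDSUBSETP{} input, meaning that \SUBSETP{}, \MIXEDSUBSETP{}, and \VARSUBSETP{} solutions coincide on it---upgrades this to a \MIXEDSUBSETP{} solution. Conversely, a \MIXEDSUBSETP{} solution on some $T_r$ is, by the same validity, also a \VARSUBSETP{} solution, and Lemma \ref{lem:tabletoas}(v) extracts a \SUBSETP{} solution on $a_1, \ldots, a_n$.

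The main obstacle I foresee is precisely the parametric dependence on $r$: the final row of the table rigidly enforces $\sum_i \lambda_{\alpha_i} \in \{r, n-r\}$, so no single $r$ can cover candidate subsets of all cardinalities. Enumerating $r$ sidesteps this but yields a Cook (polynomial-time Turing) reduction rather than a Karp reduction, which is nonetheless sufficient for the desired NP-hardness conclusion. Combined downstream with Lemma \ref{lem:parallel}, which collapses $\ZZ^{n+3}$ back to $\ZZ$, this completes the chain of reductions leading to NP-completeness of \MIXEDSUBSETP{} over $\ZZ$.
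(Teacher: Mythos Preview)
Your proposal is correct and follows essentially the same route as the paper: both enumerate the parameter $r$ over $\{1,\ldots,n-1\}$, build the corresponding table for each value, and invoke Lemmas~\ref{lem:tabletoas} and~\ref{lem:astotable} for the two directions of the equivalence, yielding a Cook reduction via $O(n)$ oracle calls. Your write-up is in fact slightly more explicit than the paper's in spelling out why validity of the table as a \MIXEDSUBSETP{} input lets one pass freely between \SUBSETP{}, \VARSUBSETP{}, and \MIXEDSUBSETP{} solutions on it.
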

\begin{proof}
  By Lemmas \ref{lem:tabletoas} and \ref{lem:astotable}, $a_1, \ldots, a_n$ has a solution to \SUBSETP{} \emph{iff} the table has a solution to \MIXEDSUBSETP{} for some value of $r$ ($0 < r < n$).  So, running an oracle for \MIXEDSUBSETP{} at most $n - 2$ times gives a solution to the \SUBSETP{} problem.
\end{proof}

\begin{cor}
  The problem \MIXEDSUBSETP{} over $\ZZ$ is NP-complete.
\end{cor}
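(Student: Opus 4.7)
The plan is simply to assemble the pieces that the preceding results have put in place. First I would dispense with the membership in NP: a candidate solution to \MIXEDSUBSETP{} over $\ZZ$ is a nonzero vector $\lambda \in \{0,1\}^n$, which together with the input (including the specification of which coordinates are constrained by equality and which by their complements) can be verified in polynomial time by computing a handful of integer sums of polynomial bit-length. Thus \MIXEDSUBSETP{} over $\ZZ$ lies in NP.

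For NP-hardness I would compose the three reductions already in the paper. Classically, \SUBSET{} over $\ZZ$ is NP-complete, and by the opening remark (or by Lemma~\ref{lem:sspnp}) \SUBSETP{} over $\ZZ$ is equivalent to it, hence NP-hard. Theorem~\ref{thm:main} then gives a polynomial-time reduction from \SUBSETP{} over $\ZZ$ to \MIXEDSUBSETP{} over $\ZZ^{n+3}$, by constructing the explicit table and calling a \MIXEDSUBSETP{} oracle once for each of the $n-2$ admissible values of $r$ with $0 < r < n$. Finally, Lemma~\ref{lem:parallel} reduces \MIXEDSUBSETP{} over $\ZZ^{k}$ to \MIXEDSUBSETP{} over $\ZZ$ via the standard base-$N$ aggregation $y_i = \sum_j N_j v_i^{(j)}$ with the $N_j$ growing fast enough (but with polynomially-many bits) to force no carries. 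Composing these three reductions yields a polynomial-time reduction from an NP-complete problem to \MIXEDSUBSETP{} over $\ZZ$.

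There is no substantive obstacle; the corollary is essentially a one-line assembly of the preceding lemmas and the main theorem. The only point worth flagging explicitly is that Theorem~\ref{thm:main} is phrased as a Turing (oracle) reduction rather than a many-one reduction, because in principle one iterates over the choices of $r$. Since this iteration is over $O(n)$ values and each call is polynomial, it still constitutes a polynomial-time reduction, which is enough to transfer NP-hardness; combined with membership in NP, this gives NP-completeness.
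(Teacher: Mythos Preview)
Your proof is correct and follows essentially the same approach as the paper: chain Lemma~\ref{lem:sspnp}, Theorem~\ref{thm:main}, and Lemma~\ref{lem:parallel} to reduce \SUBSET{} over $\ZZ$ to \SUBSETP{} over $\ZZ$ to \MIXEDSUBSETP{} over $\ZZ^k$ to \MIXEDSUBSETP{} over $\ZZ$, and note that membership in NP is clear. Your explicit flag that Theorem~\ref{thm:main} gives a Turing (oracle) rather than many-one reduction is a valid caveat that the paper itself leaves implicit.
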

\begin{proof}
  We use Lemma \ref{lem:sspnp}, theorem \ref{thm:main} and Lemma \ref{lem:parallel} to give reductions from \SUBSET{} over $\ZZ$, to \SUBSETP{} over $\ZZ$, to \MIXEDSUBSETP{} over $\ZZ^k$, to \MIXEDSUBSETP{} over $\ZZ$ in that order.

  (That \MIXEDSUBSETP{} is in NP is clear.)
\end{proof}


\clearpage{}

\fancyhead[EC]{\uppercase{\footnotesize{Lukas Brantner}}}

University of Cambridge, St. John's College, Cambridge CB2 1TP, United Kingdom\\\
E-mail address: \href{mailto:dlbb2@cam.ac.uk}{dlbb2@cam.ac.uk}


\begin{thebibliography}{99}
\bibitem{[1]} A. Schrijver, \emph{Theory of linear and integer programming}, John Wiley, New York, 1986
\bibitem{[2]} M. Bousquet-Melou, M.Petkovsek, \emph{Linear recurrences with constant coefficients: the multivariate case}, Discrete
\bibitem{[3]} D. Calegari, \emph{scl}, MSJ Memoirs, \textbf{20.} Mathematical Society of Japan, Tokyo, 2009.
\bibitem{[4]} D. Calegari, \emph{Scl, sails and surgery}, Jour. Topology \textbf{4} (2011), no. 2, 305-326.
\bibitem{[5]} D. Calegari, A. Walker, \emph{Isometric endomorphisms of free groups}, New York J. Math., to appear 
\bibitem{[6]} A. Schrijver, \emph{Combinatorial Optimization}, Springer, Heidelberg, 2002.
\bibitem{[7]} A. Walker, \emph{sss}, computer program, available from the author's website. Math.  \textbf{255} (2000) 51-75.
\end{thebibliography}
\end{document}